\documentclass{amsart}

\makeatletter
\@namedef{subjclassname@2020}{%
  \textup{2020} Mathematics Subject Classification}
\makeatother 

\usepackage{amsthm,amssymb,amsfonts,latexsym,mathtools,thmtools}
\usepackage{amsmath}
\usepackage[T1]{fontenc}
\usepackage{tikz-cd} 
\usepackage{enumitem} 
\usepackage{hyperref} 
\hypersetup{
    colorlinks=true,
    linkcolor=blue,
    filecolor=blue,      
    urlcolor=cyan,
    linktocpage=true
}

\newtheorem{theorem}{Theorem}[section]

\theoremstyle{definition}
\newtheorem{definition}[theorem]{Definition}
\newtheorem{proposition}[theorem]{Proposition}
\newtheorem{example}[theorem]{Example}
\newtheorem{remark}[theorem]{Remark}
\newtheorem{corollary}[theorem]{Corollary}




\numberwithin{equation}{section}

\begin{document}

\title[Homogenized skew PBW extensions]{Homogenized skew PBW extensions}



\author{H\'ector Su\'arez}
\address{Universidad Pedag\'ogica y Tecnol\'ogica de Colombia - Sede Tunja}
\curraddr{Campus Universitario}
\email{hector.suarez@uptc.edu.co}
\thanks{}

\author{Armando Reyes}
\address{Universidad Nacional de Colombia - Sede Bogot\'a}
\curraddr{Campus Universitario}
\email{mareyesv@unal.edu.co}
\thanks{}

\author{Y\'esica Su\'arez}
\address{Universidad Pedag\'ogica y Tecnol\'ogica de Colombia - Sede Tunja}
\curraddr{Campus Universitario}
\email{ypsuarezg@unal.edu.co}

\thanks{The authors were supported by the research fund of Faculty of Science, Code HERMES 52464, Universidad Nacional de Colombia - Sede Bogot\'a, Colombia.}

\subjclass[2020]{16S36, 16S37, 16W50, 16W70, 13N10}

\keywords{$\sigma$-filtered skew PBW extension, homogenization, skew Calabi-Yau algebra}

\date{}

\dedicatory{Dedicated to the memory of Professor Vyacheslav Artamonov}

\begin{abstract}

In this paper, we provide a new and more general filtration to the family of noncommutative rings known as skew PBW extensions. We introduce the notion of $\sigma$-filtered skew PBW extension and study some homological properties of these algebras. We show that the homogenization of a $\sigma$-filtered skew PBW extension $A$ over a ring $R$ is a graded skew PBW extension over the homogenization of $R$. Using this fact, we prove that if the homogenization of $R$ is Auslander-regular, then the homogenization of $A$ is a domain Noetherian, Artin-Schelter regular, and $A$ is Noetherian, Zariski and (ungraded) skew Calabi-Yau.

\end{abstract}

\maketitle


\section{Introduction}\label{introduction}

Skew PBW extensions (also known as $\sigma$-PBW extensions) were defined by Ga\-lle\-go and Lezama \cite{GallegoLezama} with the aim of extending the skew polynomial rings (also known as Ore extensions) of injective type introduced by Ore \cite{Ore1933}, and the differential operator rings and PBW extensions defined by Bell and Goodearl \cite{BellGoodearl1988}. Several ring-theoretical properties of skew PBW extensions have been investigated by some authors (\cite{AcostaLezama2015}, \cite{Artamonov1}, \cite{Fajardoetal2020}, \cite{HashemiKhalilAlhevaz2017},  \cite{HashemiKhalilGhadiri2019}, \cite{LouzariReyes2020}, \cite{ReyesSuarez2019Radicals}, \cite{ReyesSuarezYesica}, \cite{TumwesigyeRichterSilvestrov2019} and \cite{Zambrano2020}).

\vspace{0.2cm}

Lezama and the second author \cite{LezamaReyes} defined a filtration for these objects considering the degree of each element of the ring of coefficients as zero (\cite[Theorem 2.2]{LezamaReyes}). Using this filtration, different homological properties have been formulated (e.g. \cite{Fajardoetal2020}, \cite{LezamaGallego2016}, \cite{LezamaGomez2019}, \cite{Suarez}, and \cite{SuarezLezamaReyes2107-1}). Our objective in this paper is to consider the ring of coefficients with a non-zero filtration, and generalize some of the previous results to a more general setting.

\vspace{0.2cm}

The paper is organized as follows. In Section \ref{sect.prelim}, we recall some elementary definitions and properties of ring theory and skew PBW extensions that are needed throughout the paper. Section \ref{sect.fiter skew} contains the definition of filtration on skew PBW extensions over positively filtered algebras, give some remarkable examples and study some properties of these noncommutative rings (Theorem \ref{teo.filtracion gen} and Propositions \ref{prop.filt finita}, \ref{prop.connected filt}, \ref{prop.relat filtr Lezama}, and \ref{prop.libre-filt}). Next, Section \ref{sect.homogeniza} presents properties of $\sigma$-filtered skew PBW extensions over finitely presented algebras (Propositions \ref{prop.R f. pres impl A f. pres}, \ref{prop.filtracion fin pres}, and \ref{prop.rel homogeniz Grad}). We show that the homogenization of a $\sigma$-filtered skew PBW extension over a finitely presented algebra $R$ is a graded skew PBW extension over the homogenization of $R$ (Theorem \ref{teo. homogenization}). In Section \ref{sect.essen Cala}, for $A$ a $\sigma$-filtered skew PBW extension over a ring $R$, we establish properties for $A$, its associated graded ring $G(A)$, and the homogenization $H(A)$ of $A$ (Theorems \ref{teo.propiedades H(A)} and \ref{teo.skew CY}, and Proposition \ref{prop.ex dim 2}). Finally, Section \ref{confutwork} presents some ideas for a possible future work concerning the ideas developed here and topics of interest in computational algebra and noncommutative geometry.
\section{Preliminaries}\label{sect.prelim}
Throughout the paper, the word ring means an associative ring (not
necessarily commutative) with identity. $\mathbb{K}$ denotes a field, all algebras are $\mathbb{K}$-algebras, $\dim(V)$ is the dimension of a $\mathbb{K}$-vector space $V$, all modules are left modules, and the tensor product $\otimes$ means $\otimes_{\mathbb{K}}$. The symbols $\mathbb{N}$, $\mathbb{Z}$ and $\mathbb{C}$ denote the set of natural numbers including zero, the ring of integers, and the field of complex numbers, respectively.

\vspace{0.2cm}

An algebra $R$ is called $\mathbb{Z}$-{\em graded} if there exists a family of subspaces $\{R_p\}_{p\in \mathbb{Z}}$ of $R$ such that $R = \bigoplus_{p\in \mathbb{Z}}R_p$ and $R_pR_q\subseteq R_{p+q}$, for all $p, q\in \mathbb{Z}$. A graded algebra $R$ is called {\em positively graded} (or $\mathbb{N}$-{\em graded}) if $R_p = 0$, for all $p<0$. An $\mathbb{N}$-graded algebra $R$ is called {\em connected} if $R_0 = \mathbb{K}$. A non-zero element $x\in R_p$ is called a {\em homogeneous element} of $R$ of degree $p$. A homogeneous element $z$ of a graded algebra $R$ is said to be {\em regular} if it is neither a left nor a right zero divisor. For $R$ and $S$ two connected graded algebras, if there exists a central element $z\in S_1$ such that $R\cong S/\langle z\rangle$, then $S$ is called a {\em central extension} of $R$. If further $z$ is regular in $S$, then $S$ is called a {\em central regular extension} of $R$. If $R$ is a $\mathbb{Z}$-graded algebra, $R(l) := \bigoplus_{p\in \mathbb{Z}}R(l)_p$, where $R(l)_p=R_{p+l}$, for $l\in \mathbb{Z}$. 

\vspace{0.2cm}

An algebra $R$ is said to be {\em finitely graded} if the following conditions hold:
\begin{itemize}
\item $R$ is $\mathbb{N}$-graded,
\item $R$ is connected,
\item $R$ is \emph{finitely generated} as $\mathbb{K}$-algebra, i.e. there are finite elements $t_1,\dots, t_m\in R$ such that the set
$\{t_{i_1}t_{i_2}\cdots t_{i_p}\mid 1\leq i_j\leq m,\ p\geq 1\} \cup \{1\}$ spans $R$ as a $\mathbb{K}$-space.
\end{itemize}
A {\em filtration} $\mathcal{F}$ on an algebra $R$ is a collection
of vector spaces $\{\mathcal{F}_p(R)\}_{p\in \mathbb{Z}}$ such that
$\mathcal{F}_p(R) \subseteq \mathcal{F}_{p+1}(R),$\
$\mathcal{F}_p(R)\cdot\mathcal{F}_q(R)\subseteq
\mathcal{F}_{p+q}(R)$, for every $p, q \in \mathbb{Z}$, and $\bigcup_{p\in \mathbb{Z}}\mathcal{F}_p(R)=R$. The filtration $\mathcal{F}$ is said to be \emph{finite} if each $\mathcal{F}_p(R)$ is a finite dimensional subspace. The filtration is \emph{positive} if $\mathcal{F}_{-1}(R)= 0$. In this case, we say that $R$ is \emph{positively} filtered ($\mathbb{N}$-filtered). If $0\neq r\in\mathcal{F}_p(R)\setminus \mathcal{F}_{p-1}(R)$, then $r$ is said to have \emph{degree} $p$, and write $\deg(r)=p$. A positive filtration is said to be \emph{connected} if $\mathcal{F}_0(R) = \mathbb{K}$; in this case, we say that $R$ is \emph{connected filtered}. The associated graded algebra of $R$ is given by $G_{\mathcal{F}}(R):= \bigoplus_{p\geq 0}\mathcal{F}_p(R)/\mathcal{F}_{p-1}(R)$. Notice that $G_{\mathcal{F}}(R)$ is connected if the filtration $\mathcal{F}$ is connected. We simply write $G(R)$ if no confusion arises.

\vspace{0.2cm}

The associated \emph{Rees algebra} is defined as ${\rm Rees}_{\mathcal{F}}(R): = \bigoplus_{p\geq 0}\mathcal{F}_p(R)z^p$. The filtration $\{F_p(R)\}_{p\in \mathbb{Z}}$ is \emph{left} ({\em right}) {\em Zariskian} and $R$ is called a \emph{left} ({\em right}) {\em Zariski ring} if $F_{-1}(R)\subseteq {\rm Rad}(F_0(R))$ (where ${\rm Rad}(F_0(R))$ is the Jacobson radical of $F_0(R)$), and the associated Rees ring ${\rm Rees}_{\mathcal{F}}(R)$ is left (right) Noetherian. Of course, if $R$ is graded, then $R=G(R)$. In this case, we write $R_p$ for the vector space spanned by homogeneous elements of degree $p$. If $R$ is a filtered algebra with filtration $\{\mathcal{F}_p(R)\}_{p\in \mathbb{Z}}$ and $M$ is an $R$-module, then we say that $M$ is \emph{filtered} if there exists a family $\{\mathcal{F}_p(M)\}_{p\in \mathbb{Z}}$ of subspaces of $M$ such that $\mathcal{F}_p(M) \subseteq \mathcal{F}_{p+1}(M)$,\ $\mathcal{F}_p(R)\cdot\mathcal{F}_q(M)\subseteq \mathcal{F}_{p+q}(M)$, and $\bigcup_{p\in \mathbb{Z}}\mathcal{F}_p(M)=M$. If $m\in M_p\setminus M_{p-1}$, then $m$ is said to have \emph{degree} $p$. For further details about filtered and Rees rings, see \cite{Huishivan1996}.

\vspace{0.2cm}

For $R$ a connected graded algebra, its \emph{global homological dimension} ${\rm gld}(R)$ is the projective dimension of the trivial $R$-module $\mathbb{K} = R/R_+$, where $R_+$ is the augmentation ideal generated by all degree one elements. If $V$ is a generating set for $R$ and $V^n$ is the set of elements of degree $n$, then the \emph{Gelfand-Kirillov dimension} of $R$ is defined as ${\rm GKdim}(R):= \overline{\rm lim} _{n\to\infty}{\rm log}_n(\dim V^n)$. The algebra $R$ is said to be Artin-Schelter \emph{Gorenstein} if ${\rm Ext}^i_R({\mathbb{K}_R},R)\cong\delta_{i,d}{\mathbb{K}}$, where $\delta_{i,d}$ is the Kronecker delta and $d = {\rm gld}(R)$.

\vspace{0.2cm}

The {\em free associative algebra} $L$ in $m$ generators $t_1,\dots, t_m$, denoted by $L:=\mathbb{K}\langle t_1,\dots, t_m\rangle$, is the ring whose underlying $\mathbb{K}$-vector space  is the set of all words in the indeterminates $t_i$, that is, expressions of the form $t_{i_1}t_{i_2}\dots\ t_{i_p}$, for some $p\geq 1$, where $1\leq i_j \leq m$, for all $j$. The \emph{degree} ($\deg$) of a word $t_{i_1}t_{i_2}\dots t_{i_p}$ is $p$, and the degree of an element $f\in L$ is the maximum of the degrees of the words in $f$. We include among the words a symbol 1, which we think of as the empty word, and which has degree 0. The product of two words is concatenation, and this operation is extended linearly to define an associative product on all elements. Notice that $L$ is positively graded with graduation given by $L:=\bigoplus_{p \geq 0}L_p$, where $L_0= \mathbb{K}$ and $L_p$ spanned by all words of degree $p$ in the alphabet $\{t_1, \dots, t_m\}$, for $p>0$. $L$ is connected and therefore augmented, where the augmentation of $L$ is given by the natural projection $\varepsilon: \mathbb{K}\langle t_1,\dots, t_n\rangle\to L_0= \mathbb{K}$ and the augmentation ideal is given by $L_+:=\bigoplus_{p > 0}L_p$. $L$ is connected filtered with the standard filtration $\{\mathcal{F}_q(L)\}_{q\in \mathbb{N}}$, where $\mathcal{F}_q(L)=\bigoplus_{p\leq q} L_p$.

\vspace{0.2cm}

An algebra $R$ is \emph{finitely presented} if it is a quotient of the form $\mathbb{K}\langle t_1, \dots, t_m\rangle/I$ where $I$ is a finitely generated two-sided ideal of $\mathbb{K}\langle t_1, \dotsc, t_m\rangle$, say $I = \langle r_1,\dots, r_s\rangle$. $\mathbb{K}\langle t_1, \dots, t_m\rangle/I$ is said to be a \emph{presentation} of $R$ with generators $t_1, \dots, t_m$ and relations $r_1,\dots, r_s$. Throughout the paper, we assume that $\{r_1,\dots, r_s\}$ is a minimal set of relations for $R$, the generators $t_i$ all have degree 1, and none of the relations $r_i$ are linear. Notice that if the relations $r_1,\dots, r_s$ are all homogeneous, then $R$ is called a {\em connected graded algebra}. Now, by a \emph{deformation} of a connected graded algebra $R$ we mean an algebra
\begin{equation}\label{eq.deform}
U = \mathbb{K}\langle t_1, \dots, t_m\rangle/\langle r_1+l_1,\dots, r_s+l_s\rangle,
\end{equation}
where $l_1,\dots, l_s$ are (not necessarily homogenous) elements of $\mathbb{K}\langle t_1, \dots, t_m\rangle$ such that $\deg(l_i) < \deg(r_i)$, for all $i$. There is a standard filtration on $U$ induced by the standard filtration on $\mathbb{K}\langle t_1, \dots, t_m\rangle$. If $g = \sum_{k=0}^pg_k\in \mathbb{K}\langle t_1, \dots, t_m\rangle,$ where each nonzero $g_k$ is a homogeneous polynomial of degree $k$, and $\deg(g_1) < \deg(g_2) <\cdots < \deg(g_p)$, then $g_p$ is said to be the \emph{leading homogeneous polynomial} of $g$, which is denoted by ${\rm lh}(g)$. The \emph{homogenization} $\widehat{g}$ of $g$ is given by $\widehat{g}=\sum_{k=0}^pg_kz^{p-k}$, where $z$ is a new central indeterminate. Let $R = \mathbb{K}\langle t_1, \dots, t_m\rangle/\langle f_1,\dots, f_s\rangle$ be a finitely presented algebra. Since $R$ is not necessarily graded, if we homogenize every polynomial $f_i \in R$, we obtain a graded algebra known as the {\em homogenization of} $R$.

\vspace{0.2cm}

In the setting of noncommutative rings having PBW bases, Cassidy and Shelton \cite[Theorem 1.3]{Cassidy2007} proved that a deformation $U$ of the graded algebra $R$ is a PBW deformation if and only if the homogenization of $U$ is a regular central extension. Other properties of central extensions and homogenization have been used by several authors to study certain classes of algebras (e.g. \cite{Cassidy2007}, \cite{Chirvasitu2018}, \cite{Gaddis2016}, \cite{Shen2016}, and \cite{Wu2013}). 

\vspace{0.2cm}

Of interest for us in this paper, we recall the following definition.
\begin{definition}(\cite[Definition 2.1]{Gaddis2016}). 
Let $U = \mathbb{K}\langle t_1, \dots, t_m\rangle/\langle f_1,\dots, f_s\rangle$ be an algebra, where $f_1,\dots, f_s\in L = \mathbb{K}\langle t_1, \dots, t_m\rangle$. The graded algebra $H(U) = L[z]/\langle\widehat{f}_1, \dots, \widehat{f}_s\rangle$ is called the \emph{homogenization} of $U$.
\end{definition}
In other words, the homogenization $H(R)$ of $R = L/\langle f_1,\dots, f_s\rangle$ is the algebra with $n + 1$ generators $t_1, \dots, t_n$, and $z$, subject to the homogenized relations $\widehat{f}_k$ as well as the additional relations $zt_i-t_iz$, for $1\leq i\leq n$.

\vspace{0.2cm}

Notice that if $U := \mathbb{K}\langle t_1, \dots, t_m\rangle/\langle f_1,\dots, f_s\rangle$ is an algebra and we consider
\begin{equation*}
R:= \mathbb{K}\langle t_1, \dots, t_m\rangle/\langle {\rm lh}(f_1),\dots, {\rm lh}(f_s)\rangle,
\end{equation*}
then we have a natural graded surjective homomorphism $\phi: R\to G(U)$.
When $\phi$ is an isomorphism, we say that $U$ is a {\em Poincar\'e-Birkhoff-Witt} ({\em PBW}) \emph{deformation} of $G(U)$ \cite[Definition 2.6]{Gaddis2016}. If $R$ is a connected graded algebra, a deformation $U$ of $R$ as in (\ref{eq.deform}) is said to be a {\em PBW deformation} if $G(U)$ is isomorphic to $R$.
\begin{definition}(\cite{Artin1987}). 
A connected graded algebra $R$ is said to be \emph{Artin-Schelter regular} of dimension $d$ if:
\begin{enumerate}
\item[\rm (1)] $R$ has finite global dimension $d$;
\item[\rm (2)] $R$ has finite Gelfand-Kirillov dimension;
\item[\rm (3)] ${\rm Ext}_R^i(\mathbb{K}, R) = 0$ if $i \neq d$, and ${\rm Ext}^d_R(\mathbb{K}, R)\cong \mathbb{K}$.
\end{enumerate}
\end{definition}
Now, we recall the definition of skew PBW extension and some of its properties.
\begin{definition}(\cite[Definition 1]{GallegoLezama}).\label{def.skewpbwextensions}
Let $R$ and $A$ be rings. We say that $A$ is a \textit{skew PBW extension over} $R$ (also called a $\sigma$-{\em PBW extension over} $R$) if the following conditions hold:
\begin{enumerate}
\item[\rm (1)]$R$ is a subring of $A$ sharing the same identity element.
\item[\rm (2)]There exist finitely many elements $x_1,\dots ,x_n\in A$ such that $A$ is a  free $R$-module, with basis the basic elements ${\rm Mon}(A):= \{x^{\alpha}=x_1^{\alpha_1}\cdots x_n^{\alpha_n}\mid \alpha=(\alpha_1,\dots ,\alpha_n)\in \mathbb{N}^n\}$.
\item[\rm (3)]For each $1\leq i\leq n$ and any $r\in R\ \backslash\ \{0\}$, there exists an element $c_{i,r}\in R\ \backslash\ \{0\}$ such that $x_ir-c_{i,r}x_i\in R$.
\item[\rm (4)]For any elements $1\leq i,j\leq n$, there exists $d_{i,j}\in R\ \backslash\ \{0\}$ such that
\begin{equation}\label{sigmadefinicion2}
x_jx_i-d_{i,j}x_ix_j\in R+Rx_1+\cdots +Rx_n.
\end{equation}
Under these conditions, we write $A:=\sigma(R)\langle x_1,\dots,x_n\rangle$.
\end{enumerate}
\end{definition}
For $X = x^{\alpha}=x_1^{\alpha_1}\cdots x_n^{\alpha_n}\in {\rm Mon}(A)$, $\deg(X):= {\alpha_1}+\cdots + {\alpha_n}$. The relationship between skew polynomial rings and skew PBW extensions is presented in the following proposition.
\begin{proposition}(\cite[Proposition 3]{GallegoLezama}). \label{sigmadefinition}
Let $A$ be a skew PBW extension over $R$. For each $1\leq i\leq n$, there exist an injective endomorphism $\sigma_i:R\rightarrow R$ and a $\sigma_i$-derivation $\delta_i:R\rightarrow R$ such that $x_ir=\sigma_i(r)x_i+\delta_i(r)$, where $r\in R$.
\end{proposition}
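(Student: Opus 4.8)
The plan is to extract the maps $\sigma_i$ and $\delta_i$ directly from condition (3) of Definition \ref{def.skewpbwextensions}, and then verify the required algebraic identities by exploiting the uniqueness of the expansion of elements of $A$ in the basis ${\rm Mon}(A)$. Concretely, for each $1 \le i \le n$ and each $r \in R \setminus \{0\}$, condition (3) furnishes an element $c_{i,r} \in R \setminus \{0\}$ with $x_i r - c_{i,r} x_i \in R$; one sets $\sigma_i(r) := c_{i,r}$ and $\delta_i(r) := x_i r - \sigma_i(r) x_i \in R$, with the conventions $\sigma_i(0) := 0$ and $\delta_i(0) := 0$. Well-definedness of $c_{i,r}$ follows because $A$ is a free left $R$-module with basis ${\rm Mon}(A)$ (condition (2)): if $c x_i + c' = c'' x_i + c'''$ with $c, c', c'', c''' \in R$, then comparing the coefficients of $x_i$ and of $1$ forces $c = c''$ and $c' = c'''$. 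This coefficient-comparison principle is the engine of the whole argument.

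First I would establish additivity of $\sigma_i$ and of $\delta_i$ simultaneously: expanding $x_i(r+s)$ in two ways, namely as $\sigma_i(r+s)x_i + \delta_i(r+s)$ and as $x_i r + x_i s = \bigl(\sigma_i(r) + \sigma_i(s)\bigr)x_i + \bigl(\delta_i(r) + \delta_i(s)\bigr)$, and comparing the coefficients of $x_i$ and of $1$ gives $\sigma_i(r+s) = \sigma_i(r) + \sigma_i(s)$ and $\delta_i(r+s) = \delta_i(r) + \delta_i(s)$. Next, applying the same comparison to the two expansions
\[
x_i(rs) = (x_i r)s = \sigma_i(r)(x_i s) + \delta_i(r)s = \sigma_i(r)\sigma_i(s)x_i + \sigma_i(r)\delta_i(s) + \delta_i(r)s
\]
and $x_i(rs) = \sigma_i(rs)x_i + \delta_i(rs)$ yields $\sigma_i(rs) = \sigma_i(r)\sigma_i(s)$ together with the identity $\delta_i(rs) = \sigma_i(r)\delta_i(s) + \delta_i(r)s$, which is exactly the condition that $\delta_i$ be a $\sigma_i$-derivation. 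Expanding $x_i \cdot 1 = x_i$ shows $\sigma_i(1) = 1$ and $\delta_i(1) = 0$, so $\sigma_i$ is a ring endomorphism of $R$. Injectivity of $\sigma_i$ is then immediate: for $r \ne 0$ we have $\sigma_i(r) = c_{i,r} \ne 0$ by condition (3), hence $\ker \sigma_i = 0$.

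There is no serious obstacle here; the only ingredient that must be in place is the freeness of $A$ as a left $R$-module on the basis ${\rm Mon}(A)$, which legitimizes the coefficient comparisons used throughout. One should also note that the relation $x_i r = \sigma_i(r)x_i + \delta_i(r)$ holds trivially for $r = 0$ by the conventions above, so it holds for every $r \in R$, completing the proof.
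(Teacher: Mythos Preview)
Your argument is correct. The paper does not supply its own proof of this proposition; it merely cites \cite[Proposition 3]{GallegoLezama}. Your approach---defining $\sigma_i(r):=c_{i,r}$ and $\delta_i(r):=x_ir-\sigma_i(r)x_i$, then verifying the ring-endomorphism, $\sigma_i$-derivation, and injectivity conditions by comparing coefficients in the free $R$-basis ${\rm Mon}(A)$---is exactly the standard proof found in the original reference, so there is nothing to add or correct.
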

From now on, $\sigma_i$ and $\delta_i$ are the injective endomorphisms and the $\sigma_i$-derivations as in Proposition \ref{sigmadefinition}, respectively. 

\vspace{0.2cm}

A skew PBW extension $A$ is called \textit{bijective} if $\sigma_i$ is bijective and $d_{i,j}$ is invertible, for any $1\leq i<j\leq n$. $A$ is called \textit{quasi-commutative} if the conditions {\rm(}3{\rm)} and {\rm(}4{\rm)} in Definition \ref{def.skewpbwextensions} are replaced by the following:
\begin{enumerate}
\item[\rm (3')] for each $1\leq i\leq n$ and all $r\in R\ \backslash\ \{0\}$, there exists $c_{i,r}\in R\ \backslash\ \{0\}$ such that $x_ir=c_{i,r}x_i$;
\item[\rm (4')]for any $1\leq i,j\leq n$, there exists $d_{i,j}\in R\ \backslash\ \{0\}$ such that $x_jx_i=d_{i,j}x_ix_j$.
\end{enumerate}
Examples of bijective and quasi-commutative skew PBW extensions, and some of their properties can be found in \cite{Fajardoetal2020}, \cite{HashemiKhalilAlhevaz2019},  \cite{ReyesSuarez2019-1}, \cite{ReyesSuarez2019-2}, and \cite{ReyesSuarez2019Radicals} .

\vspace{0.2cm}

Let $I\subseteq \sum_{n\geq 2} L_{n}$ be a finitely generated homogeneous ideal of $\mathbb{K}\langle t_1,\dots, t_m\rangle$ and let $R = \mathbb{K}\langle t_1,\dots, t_m\rangle/I$ be a connected
graded algebra generated in degree 1. Suppose that $\sigma : R \to R$ is a graded algebra automorphism and $\delta : R(-1) \to R$ is a graded $\sigma$-derivation (i.e. a degree +1 graded
$\sigma$-derivation $\delta$ of $R$). Let  $B := R[x; \sigma,\delta]$ be the associated \emph{graded Ore extension} of $R$, that is, $B = \bigoplus_{p\geq 0} Rx^p$ as an $R$-module, and for $r\in R$, $xr = \sigma(r)x + \delta(r)$. If we consider $x$ to have degree 1 in $B$, then under this grading $B$ is a connected graded algebra generated in degree 1 (see \cite{Cassidy2008} and \cite{Phan} for more details).
\begin{proposition}(\cite[Proposition 2.7]{Suarez}). \label{prop.grad A}
Let $R=\bigoplus_{m\geq 0}R_m$ be an $\mathbb{N}$-graded algebra, and let $A=\sigma(R)\langle x_1,\dots, x_n\rangle$ be a bijective skew PBW extension over $R$ satisfying the following two conditions:
\begin{enumerate}
\item[\rm (1)] $\sigma_i$ is a graded ring homomorphism and $\delta_i : R(-1) \to R$ is a graded $\sigma_i$-derivation, for all $1\leq i  \leq n$.
\item[\rm (2)]  $x_jx_i-d_{i,j}x_ix_j\in R_2+R_1x_1 +\cdots + R_1x_n$, as in {\rm (\ref{sigmadefinicion2})} and $d_{i,j}\in R_0$.
\end{enumerate}
For $p\geq 0$, if $A_p$ is the $\mathbb{K}$-space generated by the set
\[\Bigl\{r_tx^{\alpha} \mid t+\deg(x^{\alpha})= p,\  r_t\in R_t \text{  and } x^{\alpha}\in {\rm Mon}(A)\Bigr\},
\]
then $A$ is an $\mathbb{N}$-graded algebra given by $A = \bigoplus_{p\geq 0} A_p$.
\end{proposition}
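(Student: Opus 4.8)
The plan is to verify directly that the family $\{A_p\}_{p \geq 0}$ is a grading on $A$, i.e. that $A = \bigoplus_{p \geq 0} A_p$ as $\mathbb{K}$-vector spaces and that $A_p A_q \subseteq A_{p+q}$ for all $p, q$. The underlying set-theoretic decomposition comes essentially for free: by Definition \ref{def.skewpbwextensions}(2), $A$ is a free left $R$-module with basis ${\rm Mon}(A)$, and since $R = \bigoplus_{t \geq 0} R_t$ is $\mathbb{N}$-graded, every element of $A$ has a unique expression $\sum_{\alpha} r_{\alpha} x^{\alpha}$ with $r_\alpha \in R$, and decomposing each $r_\alpha$ into its homogeneous components $r_\alpha = \sum_t (r_\alpha)_t$ groups the terms by total degree $t + \deg(x^\alpha)$. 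The uniqueness of these two decompositions (freeness over $R$, plus directness of the grading on $R$) gives $A = \bigoplus_{p \geq 0} A_p$ as $\mathbb{K}$-spaces. So the first step is just to make this bookkeeping precise.

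The substance is the multiplicative condition $A_p A_q \subseteq A_{p+q}$. By bilinearity it suffices to check $(r_t x^{\alpha})(s_u x^{\beta}) \in A_{p+q}$ whenever $t + \deg(x^\alpha) = p$, $u + \deg(x^\beta) = q$, $r_t \in R_t$, $s_u \in R_u$. The first move is to push $x^\alpha$ past $s_u$: iterating Proposition \ref{sigmadefinition} (the commutation rule $x_i r = \sigma_i(r) x_i + \delta_i(r)$), we get $x^\alpha s_u = \sum_\gamma c_\gamma x^\gamma$ where each $c_\gamma \in R$ is obtained from $s_u$ by applying a composite of the maps $\sigma_i$ and $\delta_i$. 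Hypothesis (1) is exactly what controls the degrees here: $\sigma_i$ is a graded homomorphism (degree $0$), so it sends $R_u$ to $R_u$, while $\delta_i : R(-1) \to R$ is graded of degree $+1$, so it raises degree by one but also decreases the monomial $x^\gamma$ relative to $x^\alpha$ by one in $x_i$-degree. Thus in every term $c_\gamma x^\gamma$ of $x^\alpha s_u$ one has $\deg_R(c_\gamma) + \deg(x^\gamma) = u + \deg(x^\alpha)$ (degree lost from the monomial is exactly compensated in $R$, degree kept in the monomial keeps $R$-degree $u$): so $x^\alpha s_u \in A_{u + \deg(x^\alpha)}$. Consequently $r_t x^\alpha s_u x^\beta \in R_t \cdot A_{u + \deg(x^\alpha)} \cdot x^\beta$.

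It then remains to reorder an arbitrary product of the $x_i$'s into a standard monomial $x^\lambda \in {\rm Mon}(A)$ while tracking degrees; this is where hypothesis (2) enters. Using the relation $x_j x_i = d_{i,j} x_i x_j + (\text{element of } R_2 + R_1 x_1 + \cdots + R_1 x_n)$ with $d_{i,j} \in R_0$, a single transposition of adjacent variables in a length-$k$ word produces a standard part $d_{i,j} x_i x_j$ of monomial-degree $2$ with $R$-coefficient in $R_0$, plus error terms that are either in $R_2$ (monomial-degree $0$, $R$-degree $2$) or in $R_1 x_\ell$ (monomial-degree $1$, $R$-degree $1$) — in all cases the quantity $\deg_R + \deg(x^\bullet)$ is preserved at the value $2$. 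By induction on word length (straightening monomials via the Bergman diamond / Poincaré–Birkhoff–Witt rewriting that underlies Definition \ref{def.skewpbwextensions}), any product $x_{i_1} \cdots x_{i_k}$ rewrites as $\sum_\lambda e_\lambda x^\lambda$ with $\deg_R(e_\lambda) + \deg(x^\lambda) = k$ for every $\lambda$, after again commuting the $R$-coefficients left using the first step. Combining the two reduction steps, the product $(r_t x^\alpha)(s_u x^\beta)$ is a sum of terms $f_\lambda x^\lambda$ with $\deg_R(f_\lambda) + \deg(x^\lambda) = t + (u + \deg(x^\alpha)) + \deg(x^\beta) = p + q$, so it lies in $A_{p+q}$. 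The main obstacle is the last induction: one has to keep the degree accounting uniform across the three types of terms that appear when straightening, and confirm that the rewriting terminates and is well-defined — but this is precisely guaranteed by the skew PBW axioms, which ensure ${\rm Mon}(A)$ is an $R$-basis, so no ambiguity arises.
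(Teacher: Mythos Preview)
The paper does not actually prove this proposition: it is quoted verbatim from \cite[Proposition 2.7]{Suarez} and stated without proof, so there is no ``paper's own proof'' to compare against. That said, your argument is correct and is the natural one (and, as far as one can infer, the one in \cite{Suarez}): the direct-sum decomposition is immediate from the freeness of $A$ over $R$ together with the grading of $R$, and the multiplicative closure $A_pA_q\subseteq A_{p+q}$ is checked on generators by first commuting $x^{\alpha}$ past $s_u$ using hypothesis~(1), then straightening the resulting monomial products using hypothesis~(2), with the total degree $\deg_R+\deg(x^{\bullet})$ conserved at each step. Your remark that the $R$-coefficients produced during straightening must themselves be pushed left (re-invoking the first step) is the only subtlety, and you handle it; termination and uniqueness are indeed guaranteed by the PBW basis axiom, as you say.
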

Proposition \ref{prop.grad A} motivates the following definition.
\begin{definition}(\cite[Definition 2.6]{Suarez}). \label{def. graded skew PBW ext} Let  $A=\sigma(R)\langle x_1,\dots, x_n\rangle$ be a bijective skew PBW extension over an $\mathbb{N}$-graded algebra $R=\bigoplus_{m\geq 0}R_m$. $A$ is said to be a \emph{graded skew PBW extension over} $R$ if it satisfies the conditions (1) and (2) established in Proposition \ref{prop.grad A}.
\end{definition}
The class of graded iterated Ore extensions of injective type is strictly contained in the class of graded skew PBW extensions. For example, homogenized enveloping algebras and diffusion algebras are graded skew PBW extensions over a field but these are not iterated Ore extensions of the field. Details and examples of graded skew PBW extensions can be found in \cite{GomezSuarez2019},  \cite{Suarez} and \cite{SuarezCaceresReyes2021}.
\section{\texorpdfstring{$\sigma -$}\ filtered skew PBW extensions}\label{sect.fiter skew}
If $R$ is an arbitrary algebra, then it is clear that $R$ is a filtered algebra with filtration given by $\mathcal{F}_p(R)=R$, for all $p\in \mathbb{Z}$. In this case, we say that  $R$ has the {\em trivial filtration}. $R$ has the {\em  trivial positive filtration} if $\mathcal{F}_p(R)=R$, for all $p\geq 0$ and $\mathcal{F}_{-1}(R)=0$. If $l\geq 0$, then $R$ is connected filtered with filtration given by
\begin{equation*}\label{eq.connec triv}
\mathcal{F}_p(R)= \left\{
  \begin{array}{ll}
    0, & \text{ if } p=-1;\\
    \mathbb{K}, & \text{ if } 0\leq p\leq l; \\
    R, & \text{ if } p>l.
  \end{array}
\right.
\end{equation*}
In this case, we say that  $R$ is an \emph{$l$-trivial connected filtered algebra}. If $l=0$, then we say that  $R$ is a \emph{trivial connected filtered algebra}. Throughout the paper, we assume that $\mathbb{K}$ has trivial connected filtration.
\begin{remark}\label{rem.grade of f} 
Let $A=\sigma(R)\langle x_1,\dots, x_n\rangle$ be a skew PBW extension.
\begin{enumerate}
\item[\rm (1)] From (\ref{sigmadefinicion2}), we have that
\begin{equation}\label{eq.rep variables x}
x_jx_i=d_{i,j}x_ix_j+r_{0_{j,i}} + r_{1_{j,i}}x_{1} + \cdots + r_{n_{j,i}}x_{n},
\end{equation}
where $d_{i,j},r_{0_{j,i}},  r_{1_{j,i}}, \dots, r_{n_{j,i}}\in R$, for $1\leq i,j\leq n$.
\item[\rm (2)] (\cite[Remark 2]{GallegoLezama}). Every element $f\in A\ \backslash\ \{0\}$ has a unique representation as
\begin{equation}\label{eq.rep unica of f}
f=c_1X_1+\cdots+c_dX_d,\ \text{ with }\ c_i\in R\ \backslash\ \{0\} \text { and } X_i\in {\rm Mon}(A), \text{ for } 1\leq i\leq d.
\end{equation}
\end{enumerate}
\end{remark}
\begin{definition}\label{def.total degree} 
Let $A=\sigma(R)\langle x_1,\dots, x_n\rangle$ be a skew PBW extension over a positively filtered algebra $R=\bigcup_{p\in
\mathbb{N}}\mathcal{F}_p(R)$.
\begin{enumerate}
\item[\rm (1)] For $X = x^{\alpha}=x_1^{\alpha_1}\cdots x_n^{\alpha_n}\in {\rm Mon}(A)$ and $c\in R\ \backslash\ \{0\}$, ${\rm tdeg}(cX):= \deg(c)+\deg(X)$.
\item[\rm (2)] Let $f=c_1X_1+\cdots+c_dX_d\in A\ \backslash\ \{0\}$, ${\rm tdeg}(f):= \max\{{\rm tdeg}(c_iX_i)\}_{i=1}^d$.
\item[\rm (3)] Let $\sigma: R\to R$ be an endomorphism of algebras. If $\sigma(\mathcal{F}_p(R))\subseteq \mathcal{F}_p(R)$, then we say
that $\sigma$ is a \emph{filtered endomorphism}.
\item[\rm (4)] Let $\delta: R\to R$ be a $\sigma$-derivation. If $\delta(\mathcal{F}_p(R))\subseteq \mathcal{F}_{p+1}(R)$, we say that $\delta$ is a \emph{filtered} $\sigma$-derivation, and if
$\delta(\mathcal{F}_p(R))\subseteq \mathcal{F}_{p+m}(R)$, then we say that $\delta$ is an $m$-filtered $\sigma$-derivation, for $m>1$.
\item[\rm (5)] We say that $A$ \emph{preserves} tdeg if for each $x_{j}x_{i}$ as in (\ref{eq.rep variables x}),
\begin{equation*}
{\rm tdeg}(x_jx_i) = {\rm tdeg}(c_{i,j}x_{i}x_{j} + r_{0_{j,i}} + r_{1_{j,i}}x_{1} + \cdots + r_{n_{j,i}}x_{n})=2.
\end{equation*}
\end{enumerate}
\end{definition}
The following theorem, one of the most important results of the paper, provides a special filtration to the skew PBW extensions.
\begin{theorem}\label{teo.filtracion gen}
If $A=\sigma(R)\langle x_1,\dots, x_n\rangle$ is a skew PBW extension over a positively filtered algebra $R$ such that the following conditions hold:
\begin{enumerate}
\item[\rm (1)] $\sigma_i$ and $\delta_i$ are filtered, for $1\leq i\leq n$;
\item[\rm (2)]  $A$ preserves {\rm tdeg},
\end{enumerate}
then $\{\mathcal{F}_p(A)\}_{p\in \mathbb{N}}$ is a filtration on $A$, where
\begin{equation}\label{eq.filtr A}
\mathcal{F}_p(A):=\{f\in A\mid {\rm tdeg}(f)\le p\}\cup \{0\}.
\end{equation}
Moreover, $A$ is a filtered $R$-module with the same filtration.
\end{theorem}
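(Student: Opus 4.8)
The plan is to verify the three defining axioms of a filtration for $\{\mathcal{F}_p(A)\}_{p\in\mathbb{N}}$ directly from the definition of $\mathrm{tdeg}$, exploiting hypotheses (1) and (2). First I would check the easy inclusion $\mathcal{F}_p(A)\subseteq\mathcal{F}_{p+1}(A)$: this is immediate since $\mathrm{tdeg}(f)\le p$ implies $\mathrm{tdeg}(f)\le p+1$. Next I would check the exhaustiveness $\bigcup_{p\in\mathbb{N}}\mathcal{F}_p(A)=A$: every nonzero $f\in A$ has a unique representation $f=c_1X_1+\cdots+c_dX_d$ by Remark \ref{rem.grade of f}(2), so $\mathrm{tdeg}(f)=\max_i(\deg(c_i)+\deg(X_i))$ is a well-defined finite natural number (each $c_i$ lies in some $\mathcal{F}_{p}(R)$ because $R=\bigcup_p\mathcal{F}_p(R)$), whence $f\in\mathcal{F}_{\mathrm{tdeg}(f)}(A)$. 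I should also note each $\mathcal{F}_p(A)$ is a $\mathbb{K}$-subspace: closure under scalars is clear, and for a sum, $\mathrm{tdeg}(f+g)\le\max\{\mathrm{tdeg}(f),\mathrm{tdeg}(g)\}$ because in the combined standard representation of $f+g$ no monomial acquires a coefficient of larger degree (cancellation can only lower degrees, using that $\mathcal{F}_q(R)$ is a subspace).

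The substantive step is multiplicativity: $\mathcal{F}_p(A)\cdot\mathcal{F}_q(A)\subseteq\mathcal{F}_{p+q}(A)$. By bilinearity it suffices to prove $\mathrm{tdeg}\big((cX)(c'X')\big)\le\mathrm{tdeg}(cX)+\mathrm{tdeg}(c'X')$ for monomials $cX,c'X'$ with $c,c'\in R\setminus\{0\}$ and $X,X'\in\mathrm{Mon}(A)$. I would reduce this to two sub-claims about how $\mathrm{tdeg}$ behaves under the two elementary "straightening" operations: (a) moving a coefficient past a variable, i.e. controlling $\mathrm{tdeg}(x_i r)$ for $r\in R$, and (b) reordering two variables, i.e. controlling $\mathrm{tdeg}(x_j x_i)$. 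For (a), Proposition \ref{sigmadefinition} gives $x_i r=\sigma_i(r)x_i+\delta_i(r)$; since $\sigma_i$ and $\delta_i$ are filtered with $\delta_i$ raising degree by at most $1$, if $\deg(r)=t$ then $\sigma_i(r)\in\mathcal{F}_t(R)$ and $\delta_i(r)\in\mathcal{F}_{t+1}(R)$, so $\mathrm{tdeg}(x_i r)\le t+1=\deg(r)+\deg(x_i)$. For (b), this is precisely hypothesis (2): $A$ preserving $\mathrm{tdeg}$ says $\mathrm{tdeg}(x_j x_i)=2=\deg(x_j)+\deg(x_i)$. Iterating (a) and (b), any product $(cX)(c'X')$ can be rewritten in the standard form $\sum c_k Y_k$ where each straightening step does not increase the total degree beyond the sum $\mathrm{tdeg}(cX)+\mathrm{tdeg}(c'X')$; combining with the subspace/subadditivity observations for sums gives the claimed bound.

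The main obstacle I anticipate is making the iteration in step (b) genuinely rigorous rather than hand-waving "do it one transposition at a time." The point is that rewriting $X\cdot X'=x^{\alpha}x^{\beta}$ into a $\mathbb{K}[R]$-linear combination of standard monomials $x^{\gamma}$ requires repeatedly applying relation (\ref{eq.rep variables x}), which introduces new coefficients $r_{k_{j,i}}\in R$ of possibly positive degree together with \emph{lower-degree} monomials $x_k$ (and the constant term), and one must also commute these newly produced coefficients leftward past the remaining $x$'s using step (a). I would organize this as an induction on the total degree $\deg(X)+\deg(X')$ (or on a suitable well-ordering of monomials à la a Diamond-Lemma argument), checking at each step that the new terms $r_{k_{j,i}}\,(\text{monomial})$ satisfy $\mathrm{tdeg}\le \deg(X)+\deg(X')$: indeed $\deg(r_{k_{j,i}})+\deg(x_k)\le 2=\deg(x_i)+\deg(x_j)$ by hypothesis (2), and degree is additive/subadditive along the remaining commutations by step (a). Once this bookkeeping is in place, the filtered $R$-module statement is then essentially free, since it is the special case $X'=\mathbf{1}$, $c'=1$ (equivalently, it is covered by the inclusion $\mathcal{F}_q(R)\cdot\mathcal{F}_p(A)\subseteq\mathcal{F}_{p+q}(A)$, which follows from the same monomial estimate with the left factor a pure coefficient of degree $\le q$).
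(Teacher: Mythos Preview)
Your proposal is correct and follows the same overall strategy as the paper: verify each filtration axiom directly from the definition of $\mathrm{tdeg}$, with the only nontrivial point being the multiplicativity $\mathcal{F}_p(A)\cdot\mathcal{F}_q(A)\subseteq\mathcal{F}_{p+q}(A)$. The difference lies in how that step is executed. The paper does not induct on monomial length; instead it invokes an explicit closed-form expansion of $a_iX_ib_jY_j$ (quoted from \cite[Remark~2.7]{ReyesSuarez2017-3}) as a sum of terms of the shape $a_i\,x_1^{\alpha_1}\cdots x_{k-1}^{\alpha_{k-1}}\bigl(\sum_j x_k^{\alpha_k-j}\delta_k(\sigma_k^{j-1}(\cdots(\sigma_n^{\alpha_n}(b_j))))x_k^{j-1}\bigr)x_{k+1}^{\alpha_{k+1}}\cdots x_n^{\alpha_n}Y_j$, observes that $\sigma_i^k$ is filtered and $\delta_i^k$ is $k$-filtered, and then appeals to hypothesis~(2) to bound each summand after the remaining variable reorderings. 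Your inductive straightening argument achieves the same bound and is more self-contained, since it avoids importing an external identity; the paper's route is shorter on the page but leaves the final ``once the commutation rules have been made'' step at roughly the same level of informality you flagged as the main obstacle. One small slip: the filtered $R$-module claim is not the special case $X'=1$, $c'=1$ (that is trivial), but rather the case where the \emph{left} factor is a pure coefficient; your parenthetical ``$\mathcal{F}_q(R)\cdot\mathcal{F}_p(A)\subseteq\mathcal{F}_{p+q}(A)$ with the left factor a pure coefficient'' is the correct formulation, and the paper proves exactly this inclusion by the direct computation $\mathrm{tdeg}((rr_i)X_i)\le\deg(r)+\deg(r_i)+\deg(X_i)$.
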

\begin{proof}
Notice that for each $f\in A\ \backslash\ \{0\}$, ${\rm tdeg}(f)\geq 0$. By definition, $0\in\mathcal{ F}_p(A)$. Let $f,g\in \mathcal{F}_p(A)$ with $f\neq 0$ and $g\neq 0$. By Remark \ref{rem.grade of f} (2), $f$ and $g$ have a unique representation as $f=c_1X_1+\cdots+c_dX_d$ and $g=r_1Y_1+\cdots+r_eY_e$,  with $c_i, r_j\in R\ \backslash\ \{0\}$ and $X_i, Y_j\in {\rm Mon}(A)$ for
$1\leq i\leq d$ and $1\leq j\leq e$. In this way, ${\rm tdeg}(c_iX_i)\leq p$ and ${\rm tdeg}(r_jY_j)\leq p$, for $1\leq
i\leq d$, $1\leq j\leq e$. Thus ${\rm tdeg}(f+g) = {\rm tdeg}(c_1X_1+\cdots+c_dX_d+r_1Y_1+\cdots+r_eY_e)=\max\{{\rm
tdeg}(c_iX_i), {\rm tdeg}(e_jY_j)\mid 1\leq i\leq d, 1\leq j\leq e\}\leq p$, and so $f+g\in \mathcal{F}_p(A)$. Now, if $k\in
\mathbb{K}$, then ${\rm tdeg}(kf) = {\rm tdeg}((kc_1)X_1+\cdots+(kc_d)X_d)\leq p$, whence $\mathcal{F}_p(A)$ is a subspace of $A$, for each $p\in \mathbb{N}$. It is clear that $\bigcup_{p\in \mathbb{N}}\mathcal{F}_p(A)=A$. If $0\neq f=c_1X_1+\cdots+c_dX_d\in\mathcal{F}_p(A)$, then ${\rm tdeg}(c_iX_i)\leq p<p+1$, for $1\leq i\leq d$, and $\mathcal{F}_p(A)\subseteq \mathcal{F}_{p+1}(A)$. Let $h\in \mathcal{F}_p(A)\cdot\mathcal{F}_q(A)$. Without loss of generality we assume that $h= h_ph_q$ with $h_p\in\mathcal{F}_p(A)$ and $h_q\in\mathcal{F}_q(A)$. Let $h_p = a_1X_1 +\cdots + a_mX_m$, $h_q = b_1Y_1 +\cdots + b_tY_t$. Then ${\rm tdeg}(a_iX_i)\leq p$ and ${\rm tdeg}(b_jY_j)\leq q$, for $1\leq i\leq m$, $1\leq j\leq t$.
Hence
\begin{equation}\label{eq.prod}
h= (a_1X_1 +\cdots + a_mX_m)(b_1Y_1 +\cdots + b_tY_t) =
\sum_{k=1}^{m+t}(\sum_{i+j=k}a_iX_ib_jY_j),
\end{equation}
and so ${\rm tdeg}(h)=\max\{{\rm tdeg}(a_iX_ib_jY_j)\mid 1\leq i\leq m, 1\leq j\leq t\}$, but obtaining the unique representation of $a_iX_ib_jY_j$ as in (\ref{eq.rep unica of f}) once the commutation rules have been made taking into account (3) and (4) in the Definition \ref{def.skewpbwextensions}.

\vspace{0.2cm}

Let $X_i=x_1^{\alpha_1}\cdots x_n^{\alpha_n}$, $Y_j=x_1^{\beta_1}\cdots x_n^{\beta_n}$, with $\alpha_i,\beta_i\in\mathbb{N}$. By \cite[Remark 2.7]{ReyesSuarez2017-3} we have that {\scriptsize{
\begin{align}\label{eq.sumas}
\begin{split}
a_i&X_ib_jY_j = a_i(x_1^{\alpha_1}\cdots x_n^{\alpha_n}b_j)x_1^{\beta_1}\cdots x_n^{\beta_n}\\
= &  \ a_ix_1^{\alpha_1}\dotsb
x_{n-1}^{\alpha_{n-1}}\biggl(\sum_{j=1}^{\alpha_n}x_n^{\alpha_{n}-j}\delta_n(\sigma_n^{j-1}(b_j))x_n^{j-1}\biggr)x_1^{\beta_1}\cdots
x_n^{\beta_n}\\
+ &\ a_ix_1^{\alpha_1}\dotsb x_{n-2}^{\alpha_{n-2}}\biggl(\sum_{j=1}^{\alpha_{n-1}}x_{n-1}^{\alpha_{n-1}-j}\delta_{n-1}(\sigma_{n-1}^{j-1}(\sigma_n^{\alpha_n}(b_j)))x_{n-1}^{j-1}\biggr)x_n^{\alpha_n}x_1^{\beta_1}\cdots x_n^{\beta_n}\\
+ &\ a_ix_1^{\alpha_1}\dotsb x_{n-3}^{\alpha_{n-3}}\biggl(\sum_{j=1}^{\alpha_{n-2}} x_{n-2}^{\alpha_{n-2}-j}\delta_{n-2}(\sigma_{n-2}^{j-1}(\sigma_{n-1}^{\alpha_{n-1}}(\sigma_n^{\alpha_n}(b_j))))x_{n-2}^{j-1}\biggr)x_{n-1}^{\alpha_{n-1}}x_n^{\alpha_n}x_1^{\beta_1}\cdots x_n^{\beta_n}\  +\dotsb \\
+ & \ a_ix_1^{\alpha_1}\biggl( \sum_{j=1}^{\alpha_2}x_2^{\alpha_2-j}\delta_2(\sigma_2^{j-1}(\sigma_3^{\alpha_3}(\sigma_4^{\alpha_4}(\dotsb (\sigma_n^{\alpha_n}(b_j))))))x_2^{j-1}\biggr)x_3^{\alpha_3}x_4^{\alpha_4}\dotsb x_{n-1}^{\alpha_{n-1}}x_n^{\alpha_n}x_1^{\beta_1}\cdots x_n^{\beta_n} \\
+ &\ a_i\sigma_1^{\alpha_1}(\sigma_2^{\alpha_2}(\dotsb
(\sigma_n^{\alpha_n}(b_j))))x_1^{\alpha_1}\dotsb
x_n^{\alpha_n}x_1^{\beta_1}\cdots x_n^{\beta_n}, \ \ \ \ \ \ \ \ \ \
\sigma_j^{0}:={\rm id}_R\ \ {\rm for}\ \ 1\le j\le n. \end{split}
\end{align}}}
Notice that as $\sigma_i$ and $\delta_i$ are filtered, then
$\sigma_i^k(\mathcal{ F}_p(R))\subseteq \sigma_i^{k-1}(\mathcal{
F}_p(R))\subseteq\cdots\subseteq \sigma_i(\mathcal{ F}_p(R))$ and $\delta_i^k(\mathcal{ F}_p(R))\subseteq \mathcal{ F}_{p+k}(R))$, and thus $\sigma_i^k$ is filtered  and $\delta_i^k$ is $k$-filtered. Furthermore, as $A$ preserves ${\rm tdeg}$, then for each of the summands in (\ref{eq.sumas}), ${\rm tdeg}\leq p+q$ (once the commutation rules have been made taking into account (3) and (4) in the Definition \ref{def.skewpbwextensions}). In this way, ${\rm tdeg}(h)\leq p+q$, and so $h\in \mathcal{F}_{p+q}$, whence $\{\mathcal{F}_p(A)\}_{p\in \mathbb{N}}$ is a filtration on $A$.

\vspace{0.2cm}

On the other hand, let $g\in \mathcal{F}_p(R)\mathcal{F}_q(A)$. Then $g=rf$, for some $r\in \mathcal{F}_p(R)$ and $f=r_1X_1+\cdots + r_tX_t\in \mathcal{F}_q(A)$. So, ${\rm tdeg}(r_iX_i)=
\deg(r_i)+\deg(X_i)\leq q$, for $1\leq i\leq t$. Thus ${\rm tdeg}(r(r_iX_i)) = {\rm  tdeg}((rr_i)X_i))=\deg(rr_i)+\deg(X_i)\leq p+\deg(r_i)+\deg(X_i)\leq p+q$, which implies that ${\rm tdeg}(rf)\leq p+q$, and therefore $rf=g\in \mathcal{F}_{p+q}(A)$. This shows that $A$ is a filtered $R$-module.
\end{proof}
Theorem \ref{teo.filtracion gen} suggests the following definition.
\begin{definition}\label{def.filt skew} 
Let $A=\sigma(R)\langle x_1,\dots, x_n\rangle$ be a skew PBW extension over a positively filtered algebra $R$. We say that $A$ is a $\sigma$-{\em filtered skew PBW extension over} $R$ if $A$ satisfies the conditions (1) and (2) in Theorem \ref{teo.filtracion gen}.
\end{definition}
In this case, it is understood that $A$ has the filtration $\{\mathcal{F}_p(A)\}_{p\in \mathbb{N}}$, where $\mathcal{F}_p(A)$ is as in (\ref{eq.filtr A}).
\begin{proposition}\label{prop.filt finita}
Let $A=\sigma(R)\langle x_1,\dots, x_n\rangle$ be a $\sigma$-filtered skew PBW extension over $R$.
\begin{enumerate}
\item[\rm (1)] If $\{\mathcal{F}_p(R)\}_{p\in \mathbb{N}}$ is a positive filtration on $R$, then $\mathcal{F}_p(R)$ is a subspace of $\mathcal{F}_p(A)$.
\item[\rm (2)] If the filtration on $R$ is finite, then the filtration of $A$ is finite.
\end{enumerate}
\end{proposition}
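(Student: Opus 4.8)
The plan is to handle the two parts separately, each resting on an explicit description of $\mathcal{F}_p(A)$ in terms of the filtration on $R$ and the free $R$-basis ${\rm Mon}(A)$. For \emph{part (1)}, I would use that the empty word $x^0 = x_1^0\cdots x_n^0 = 1$ belongs to ${\rm Mon}(A)$ and has $\deg(x^0)=0$. Given a nonzero $r\in\mathcal{F}_p(R)$, let $q=\deg(r)$ be its (filtration) degree in $R$; since the filtration of $R$ is increasing and $r\in\mathcal{F}_p(R)$, necessarily $q\le p$. Viewing $r$ inside $A$, its unique representation (\ref{eq.rep unica of f}) is simply $r = r\cdot x^0$, so ${\rm tdeg}(r)=\deg(r)+\deg(x^0)=q\le p$, i.e.\ $r\in\mathcal{F}_p(A)$; and $0\in\mathcal{F}_p(A)$ by definition. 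Since $R$ is a subring of $A$ and, by definition of a filtration, $\mathcal{F}_p(R)$ is a $\mathbb{K}$-subspace of $R$, it is thereby a $\mathbb{K}$-subspace of $A$ contained in $\mathcal{F}_p(A)$.

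For \emph{part (2)}, the key step is to identify $\mathcal{F}_p(A)$ as a direct sum of pieces indexed by monomials. Because $A=\bigoplus_{\alpha\in\mathbb{N}^n}Rx^\alpha$ as $\mathbb{K}$-spaces, each $f\in A$ has a unique expansion $f=\sum_{\alpha}r_\alpha x^\alpha$ with $r_\alpha\in R$ almost all zero, and by Definition \ref{def.total degree} one has ${\rm tdeg}(f)=\max\{\deg(r_\alpha)+\deg(x^\alpha)\mid r_\alpha\neq 0\}$. Hence $f\in\mathcal{F}_p(A)$ if and only if $\deg(r_\alpha)+\deg(x^\alpha)\le p$ for every $\alpha$ with $r_\alpha\neq 0$; equivalently, $r_\alpha=0$ whenever $\deg(x^\alpha)>p$, and $r_\alpha\in\mathcal{F}_{p-\deg(x^\alpha)}(R)$ whenever $\deg(x^\alpha)\le p$ (here positivity of the filtration on $R$ is used, so that $\mathcal{F}_q(R)=0$ for $q<0$). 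This gives
\[
\mathcal{F}_p(A)\;=\;\bigoplus_{\alpha\in\mathbb{N}^n,\ \deg(x^\alpha)\le p}\mathcal{F}_{p-\deg(x^\alpha)}(R)\,x^\alpha .
\]
I would then invoke the elementary fact that $\{\alpha\in\mathbb{N}^n\mid \alpha_1+\cdots+\alpha_n\le p\}$ is a finite set, together with the hypothesis that every $\mathcal{F}_q(R)$ is finite dimensional, to conclude $\dim_{\mathbb{K}}\mathcal{F}_p(A)=\sum_{\deg(x^\alpha)\le p}\dim_{\mathbb{K}}\mathcal{F}_{p-\deg(x^\alpha)}(R)<\infty$. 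Thus every $\mathcal{F}_p(A)$ is finite dimensional, i.e.\ the filtration on $A$ is finite.

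Neither part is delicate; the only points requiring a little care are matching the degree conventions — the filtration degree $\deg(r)$ of an element $r\in R$ is precisely the quantity entering ${\rm tdeg}(cX)=\deg(c)+\deg(X)$ — and using positivity of the filtration on $R$ both to discard the components $r_\alpha$ with $\deg(x^\alpha)>p$ and to interpret $\mathcal{F}_q(R)$ as $0$ for negative $q$. If one prefers to avoid the direct-sum language in part (2), it suffices to note that $\mathcal{F}_p(A)$ is the $\mathbb{K}$-span of the finitely many finite-dimensional subspaces $\mathcal{F}_{p-\deg(x^\alpha)}(R)\,x^\alpha$ with $\deg(x^\alpha)\le p$, whence finiteness is immediate.
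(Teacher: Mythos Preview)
Your argument is correct. Part~(1) is exactly the paper's proof: write $r = r\cdot x^0$ and observe ${\rm tdeg}(r)=\deg(r)\le p$.

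For part~(2) you take a slightly more structural route than the paper. You first establish the decomposition
\[
\mathcal{F}_p(A)=\bigoplus_{\deg(x^\alpha)\le p}\mathcal{F}_{p-\deg(x^\alpha)}(R)\,x^\alpha,
\]
which is precisely the free-filtered identity (\ref{eq.fil libre fil}) proved independently later as Proposition~\ref{prop.libre-filt}, and then read off finite-dimensionality by summing finitely many finite dimensions. The paper instead argues directly, asserting that from finite bases $\mathcal{B}^{(R)}_k$ of the $\mathcal{F}_k(R)$ together with the finitely many monomials of degree at most $p$ one obtains a finite basis of $\mathcal{F}_p(A)$. Your approach has the advantage of making the structure of $\mathcal{F}_p(A)$ completely explicit and yielding an exact dimension formula; it also makes the role of positivity of the filtration on $R$ transparent. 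The paper's argument is terser but leaves the actual form of the basis (namely the products $b\cdot X$ with $b\in\mathcal{B}^{(R)}_{p-\deg X}$) implicit in the displayed union.
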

\begin{proof}
Let $\{\mathcal{F}_p(R)\}_{p\in \mathbb{N}}$ be a connected filtration on $R$ and $\{\mathcal{F}_p(A)\}_{p\in \mathbb{N}}$ the filtration on $A$.
\begin{enumerate}
\item[\rm (1)] If $0\neq r\in \mathcal{F}_p(R)$, then $\deg(r)\leq p$. By Definition \ref{def.skewpbwextensions}(1), we have $R\subseteq A$,
and $r=rx_1^0\cdots x_n^0$ is the unique representation of $r$. This means that ${\rm tdeg}(r)=\deg(r)\leq p$, and so $r\in
\mathcal{F}_p(A)$.
\item[\rm (2)] Let $\mathcal{B}^{(R)}_k$ be a finite basis for $\mathcal{F}_k(R)$. By Definition \ref{def.skewpbwextensions}(2), we have that $A$ is a left free $R$-module with basis ${\rm Mon}(A) = \{x^{\alpha}=x_1^{\alpha_1}\cdots x_n^{\alpha_n}\mid \alpha=(\alpha_1,\dots ,\alpha_n)\in \mathbb{N}^n\}$, whence $\mathcal{B}_p= (\bigcup_{k=0}^p\mathcal{B}^{(R)}_k)\bigcup \{X\in {\rm Mon}(A)\mid \deg(X)\leq p\}$ is a finite basis for $\mathcal{F}_p(A)$.
\end{enumerate}
\end{proof}
\begin{proposition}\label{prop.connected filt}
If $A=\sigma(R)\langle x_1,\dots, x_n\rangle$ is a $\sigma$-filtered skew PBW extension over $R$, then $R$ is connected filtered if and only if $A$ is  connected filtered.
\end{proposition}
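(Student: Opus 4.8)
The plan is to reduce both implications to a direct inspection of the degree-zero layer $\mathcal{F}_0(A)$ of the filtration \eqref{eq.filtr A}, exploiting that the only element of ${\rm Mon}(A)$ of degree $0$ is $x^0 = 1$. Observe first that $A$ is automatically positively filtered, since the filtration furnished by Theorem \ref{teo.filtracion gen} is indexed by $\mathbb{N}$ (equivalently $\mathcal{F}_{-1}(A) = 0$), and $R$ is positively filtered by hypothesis; hence in each direction it is enough to control the degree-zero component.

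For the forward implication, I would assume $\mathcal{F}_0(R) = \mathbb{K}$ and take an arbitrary $0 \neq f \in \mathcal{F}_0(A)$, written in its unique form $f = c_1X_1 + \cdots + c_dX_d$ with $c_i \in R \setminus \{0\}$ and the $X_i \in {\rm Mon}(A)$ pairwise distinct (Remark \ref{rem.grade of f}(2)). Since ${\rm tdeg}(f) = \max_{1\le i\le d}(\deg(c_i) + \deg(X_i)) \le 0$ and both summands are nonnegative, we get $\deg(c_i) = \deg(X_i) = 0$ for every $i$; as $\deg(X_i) = 0$ forces $X_i = x^0 = 1$ and the $X_i$ are distinct, this yields $d = 1$ and $f = c_1 \in \mathcal{F}_0(R) = \mathbb{K}$. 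Thus $\mathcal{F}_0(A) \subseteq \mathbb{K}$, and combining this with $\mathbb{K} = \mathcal{F}_0(R) \subseteq \mathcal{F}_0(A)$ from Proposition \ref{prop.filt finita}(1) gives $\mathcal{F}_0(A) = \mathbb{K}$, so $A$ is connected filtered.

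For the converse, I would assume $\mathcal{F}_0(A) = \mathbb{K}$. By Proposition \ref{prop.filt finita}(1), $\mathcal{F}_0(R)$ is a $\mathbb{K}$-subspace of $\mathcal{F}_0(A) = \mathbb{K}$, so $\mathcal{F}_0(R) \subseteq \mathbb{K}$; conversely $\mathbb{K} \subseteq \mathcal{F}_0(R)$ because the restriction of the filtration of $R$ to the subalgebra $\mathbb{K}$ is its trivial connected filtration (the standing convention in Section \ref{sect.fiter skew}). Hence $\mathcal{F}_0(R) = \mathbb{K}$ and, together with $\mathcal{F}_{-1}(R) = 0$, this shows $R$ is connected filtered.

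There is no genuine obstacle here: the argument is bookkeeping with the definition of ${\rm tdeg}$ and the already-established Proposition \ref{prop.filt finita}. The one point deserving care is the opening step of the forward implication, where one must invoke the uniqueness of the representation \eqref{eq.rep unica of f} to conclude that an element of ${\rm tdeg}$ zero cannot involve any of the variables $x_1,\dots,x_n$ and therefore lies in $R$ with a degree-zero coefficient.
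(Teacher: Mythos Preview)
Your argument is correct and follows essentially the same route as the paper's proof: both directions reduce to comparing $\mathcal{F}_0(R)$ and $\mathcal{F}_0(A)$, using the unique representation of Remark~\ref{rem.grade of f}(2) to show that any element of $\mathcal{F}_0(A)$ must have all $X_i=1$ and hence lie in $\mathcal{F}_0(R)$, and invoking Proposition~\ref{prop.filt finita}(1) for the converse inclusion. Your write-up is in fact slightly more careful than the paper's (you make explicit the distinctness of the $X_i$ forcing $d=1$, and the reverse inclusions $\mathbb{K}\subseteq\mathcal{F}_0(A)$ and $\mathbb{K}\subseteq\mathcal{F}_0(R)$), but the strategy is identical.
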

\begin{proof}
Let $A=\sigma(R)\langle x_1,\dots, x_n\rangle= \bigcup_{p\in \mathbb{N}}\mathcal{F}_p(A)$ be a $\sigma$-filtered skew PBW extension over a connected filtered algebra $R$. If $0\neq h\in \mathcal{F}_0(A)$, then $h$ has a unique representation as $h=a_1X_1+\cdots+ a_tX_t$ (Remark \ref{rem.grade of f} (2)), whence ${\rm tdeg}(a_iX_i)=\deg(a_i)+\deg(X_i)=0$, for $1\leq i\leq t$. Thus, $\deg(a_i)=0=\deg(X_i)$, $X_i=1$, for $1\leq i\leq t$, and so $h\in R$ with $\deg(h)=0$, that is, $h\in \mathcal{F}_0(R)=\mathbb{K}$ since $R$ is connected filtered. This proves that $\mathcal{F}_0(A)=\mathbb{K}$, i.e. $A$ is connected filtered.

\vspace{0.2cm}

For the converse, let $0\neq r\in \mathcal{F}_0(R)$. By Proposition \ref{prop.filt finita}(1), $\mathcal{F}_0(R)\subseteq \mathcal{F}_0(A)=\mathbb{K}$, whence $r\in \mathbb{K}$, that is, $\mathcal{F}_0(R)=\mathbb{K}$.
\end{proof}
Lezama and Reyes \cite[Theorem 2.2]{LezamaReyes} defined a filtration $\mathcal{F}'$ for a skew PBW extension $A=\sigma(R)\langle x_1,\dots, x_n\rangle$ in the following way:
\begin{equation}\label{eq1.3.1a}
\mathcal{F}'_m(A):=\begin{cases} R, & {\rm if}\ \ m=0\\ \{f\in A\mid {\rm \deg}(f)\le m\}\cup \{0\}, & {\rm if}\ \ m\ge 1,
\end{cases}
\end{equation}
where $\deg(f):=\max\{\deg(X_i)\}_{i=1}^t$, for $f=a_1X_1+\cdots a_tX_t$. Notice that if $R$ has the trivial positive filtration, then $\deg(r)=0$, for all $r\in R$. Thus, $\deg(f) = {\rm tdeg}(f)$ (${\rm tdeg}(f)$ as in Definition \ref{def.total degree}(4) and
$\deg(f)$ as in (\ref{eq1.3.1a})), i.e. the filtration (\ref{eq.filtr A}) coincides with the filtration (\ref{eq1.3.1a}).
More precisely,
\begin{proposition}\label{prop.relat filtr Lezama}
If $A=\sigma(R)\langle x_1,\dots, x_n\rangle$ be a skew PBW extension over $R$ with filtration as in {\rm (\ref{eq1.3.1a})}, then $A$ is a $\sigma$-filtered skew PBW extension if and only if $R$ has the trivial positive filtration.
\end{proposition}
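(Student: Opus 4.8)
The plan is to prove the two implications of the biconditional in Proposition~\ref{prop.relat filtr Lezama} by comparing, for each $m$, the filtration piece $\mathcal{F}'_m(A)$ defined in \eqref{eq1.3.1a} with the piece $\mathcal{F}_m(A)$ coming from Theorem~\ref{teo.filtracion gen}, while keeping track of the hypotheses that make the latter a legitimate filtration. The key observation, already recorded in the paragraph preceding the statement, is that for $f = a_1X_1 + \cdots + a_tX_t$ one has ${\rm tdeg}(f) = \max_i\{\deg(a_i) + \deg(X_i)\}$ whereas $\deg(f) = \max_i\{\deg(X_i)\}$, so the two notions agree exactly when every coefficient $a_i \in R$ has $\deg(a_i) = 0$, i.e. when $R$ carries the trivial positive filtration $\mathcal{F}_0(R) = R$, $\mathcal{F}_{-1}(R) = 0$.

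For the direction ($\Leftarrow$), I would assume $R$ has the trivial positive filtration and verify that $A$ is then a $\sigma$-filtered skew PBW extension, i.e. that conditions (1) and (2) of Theorem~\ref{teo.filtracion gen} hold. Condition (1) is automatic: for any endomorphism $\sigma_i$ and any $\sigma_i$-derivation $\delta_i$, since $\mathcal{F}_p(R) = R$ for all $p \geq 0$ we trivially have $\sigma_i(\mathcal{F}_p(R)) \subseteq \mathcal{F}_p(R)$ and $\delta_i(\mathcal{F}_p(R)) \subseteq \mathcal{F}_{p+1}(R)$, so both are filtered. Condition (2), that $A$ preserves ${\rm tdeg}$, also holds: in \eqref{eq.rep variables x} every coefficient $d_{i,j}, r_{0_{j,i}}, \dots, r_{n_{j,i}}$ lies in $R = \mathcal{F}_0(R)$, hence has degree $0$, so ${\rm tdeg}(d_{i,j}x_ix_j) = 2$ and ${\rm tdeg}(r_{k_{j,i}}x_k) \le 2$ for each $k$, while ${\rm tdeg}(x_jx_i) = 2$ by definition; thus the total degree is $2$ on both sides as required. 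Having established that $A$ is $\sigma$-filtered, I then invoke the computation in the preamble: with the trivial positive filtration every coefficient has degree $0$, so ${\rm tdeg}(f) = \deg(f)$ for every $0 \neq f \in A$, whence $\mathcal{F}_m(A) = \mathcal{F}'_m(A)$ for all $m \geq 1$, and $\mathcal{F}_0(A) = \{f : {\rm tdeg}(f) \le 0\} = \{f : \deg(f) = 0\} \cup\{0\} = R = \mathcal{F}'_0(A)$. So the filtration \eqref{eq1.3.1a} is precisely the $\sigma$-filtered one, which is what the claim asserts.

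For the direction ($\Rightarrow$), I would assume $A$ is a $\sigma$-filtered skew PBW extension and that its filtration, as given by \eqref{eq.filtr A}, coincides with \eqref{eq1.3.1a}; I must deduce that $R$ has the trivial positive filtration. The cleanest route is to compare the degree-$0$ pieces. On one hand, from \eqref{eq1.3.1a} we have $\mathcal{F}'_0(A) = R$. On the other hand, by Proposition~\ref{prop.filt finita}(1) the positive filtration on $R$ embeds in that of $A$ so $\mathcal{F}_0(R) \subseteq \mathcal{F}_0(A)$, and conversely any $0 \neq r \in R$ viewed in $A$ has the unique representation $r = r x_1^0 \cdots x_n^0$, so ${\rm tdeg}(r) = \deg(r)$; for $r$ to lie in $\mathcal{F}_0(A) = \mathcal{F}'_0(A) = R$ is no constraint, but the hypothesis forces $\mathcal{F}_0(A) = R$, hence for \emph{every} nonzero $r \in R$ we need ${\rm tdeg}(r) = \deg(r) \le 0$, i.e. $\deg(r) = 0$. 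Since $\deg$ on $R$ is the given positive filtration degree, $\deg(r) = 0$ for all $0 \neq r \in R$ means $\mathcal{F}_0(R) = R$, and positivity gives $\mathcal{F}_{-1}(R) = 0$; that is exactly the trivial positive filtration.

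The main obstacle — or really the main subtlety — is making the equality ``$\mathcal{F}_m(A)$ coincides with $\mathcal{F}'_m(A)$'' precise enough to be useful in the ($\Rightarrow$) direction: one has to be careful that the hypothesis ``$A$ with filtration \eqref{eq1.3.1a} is a $\sigma$-filtered skew PBW extension'' is being read as ``the filtration \eqref{eq1.3.1a} \emph{is} the filtration \eqref{eq.filtr A} prescribed by Definition~\ref{def.filt skew}'', since that is the only way the degree-$0$ comparison bites. Once this reading is fixed, the argument is a short degree bookkeeping; no delicate commutation computations are needed because everything reduces to the observation that $\deg$ ignores coefficients while ${\rm tdeg}$ does not, and they agree iff all coefficients sit in degree zero.
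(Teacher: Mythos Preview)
Your proposal is correct and follows essentially the same approach as the paper's own proof: in the $(\Leftarrow)$ direction both you and the paper verify that $\sigma_i,\delta_i$ are trivially filtered and that $A$ preserves ${\rm tdeg}$ because every coefficient has degree $0$; in the $(\Rightarrow)$ direction both compare the degree-$0$ piece $\mathcal{F}'_0(A)=R$ with $\mathcal{F}_0(A)=\{f:{\rm tdeg}(f)\le 0\}\cup\{0\}$ and read off $\deg(r)=0$ for all $r\in R$. Your final paragraph correctly isolates the only interpretive point --- that the hypothesis must be read as the two filtrations \eqref{eq.filtr A} and \eqref{eq1.3.1a} coinciding --- which the paper also uses tacitly.
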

\begin{proof}
Suppose that $A=\sigma(R)\langle x_1,\dots, x_n\rangle$ is $\sigma$-filtered with the filtration given in (\ref{eq1.3.1a}). Then $R$ is positively filtered and $\mathcal{F}'_0(A)=R$. By (\ref{eq.filtr A}), $\mathcal{F}'_0(A)=\{f\in A\mid  {\rm tdeg}(f)\le 0\}\cup \{0\}=R$. Let $r\in R=\mathcal{F}'_0(A)$. Then ${\rm tdeg}(r)=\deg(r)=0= \min\{p\in \mathbb{N}\mid r\in\mathcal{F}_p(R)\}$, i.e. $r\in \mathcal{F}_0(R)$. Therefore $\mathcal{F}_0(R)=R$ and so $\mathcal{F}_p(R)=R$ for $p\geq 0$.

\vspace{0.2cm}

For the converse, assume that $R$ has the trivial positive filtration, i.e. $\mathcal{F}_p(R)=R$ for all $p\geq 0$. Notice that $\sigma_i(\mathcal{F}_p(R))=\sigma_i(R)\subseteq R$ and $\delta_i(\mathcal{F}_p(R))= \delta_i(R)\subseteq R = \mathcal{F}_{p+1}(R)$. Thus $\sigma_i$ and $\delta_i$ are filtered. Now, as $x_jx_i=c_{i,j}x_ix_j+r_{0_{j,i}} + r_{1_{j,i}}x_{1} + \cdots + r_{n_{j,i}}x_{n}$ with
$c_{i,j},r_{0_{j,i}},  r_{1_{j,i}}, \dots, r_{n_{j,i}}\in R=R_0$, then $\deg(c_{i,j})=\deg(r_{0_{j,i}})=\deg(r_{1_{j,i}})= \cdots=
\deg(r_{n_{j,i}})=0$, so, ${\rm tdeg}(c_{i,j}x_ix_j+r_{0_{j,i}} + r_{1_{j,i}}x_{1} + \cdots + r_{n_{j,i}}x_{n})=2$, that is, $A$ preserves ${\rm tdeg}$. Thus $A$ is a $\sigma$-filtered skew PBW extension.
\end{proof}
\begin{remark}\label{rem.general fil Lez}
From Proposition \ref{prop.relat filtr Lezama} we obtain that every skew PBW extension $A$ over a ring $R$ is $\sigma$-filtered with the
filtration given in Lezama and Reyes \cite[Theorem 2.2]{LezamaReyes}. Of course, graded skew PBW extensions  are
trivially $\sigma$-filtered skew PBW extensions. For some examples of skew PBW extensions, filtrations (\ref{eq.filtr A}) and
(\ref{eq1.3.1a}) coincide. More exactly, if we consider a skew PBW extension $A$ over $R$ where $\sigma_i$ is the identity map on $R$ and $\delta_i=0$, for each $1\leq i\leq n$, where $\sigma_i$ and $\delta_i$ are as in Proposition \ref{sigmadefinition}, then the skew PBW extensions are trivially $\sigma$-filtered skew PBW extensions. For instance, if $\mathfrak{g}$ is a finite dimensional Lie algebra over $\mathbb{K}$, then its universal enveloping algebra $U(\mathfrak{g})$ satisfies these conditions. 
\end{remark}
\begin{example}\label{ex.Weyl extend}
\begin{enumerate}
    \item [\rm (1)] The Weyl algebra $\mathbb{K}[t_1,\dots, t_n][x_1,\partial/\partial t_1]\cdots[x_n,\partial/\partial t_n]$, denoted by $A_n(\mathbb{K})$, is a skew PBW extension over $\mathbb{K}[t_1,\dots, t_n]$, i.e. $A_n(\mathbb{K})\cong \sigma(\mathbb{K}[t_1,\dots, t_n])\langle x_1,\dots, x_n\rangle$, where $x_it_j = t_jx_i+ \delta_{ij}$, $x_ix_j-x_jx_i= 0$, and $\delta_{ij}= 0$ for $i\neq j$ and $\delta_{ii} = 1$, $1 \leq i,j \leq n$. The endomorphisms and derivations of Proposition \ref{sigmadefinition} are $\sigma_i$, the identity map of $R$, and $\delta_i=\delta_{ij}$, respectively. If $R$ is endowed with the standard filtration, then $A_n(\mathbb{K})$ is a $\sigma$-filtered skew PBW extension.
\item [\rm (2)] Let $\mathbb{K}$ be a field of characteristic zero. It is well known that $A_n(\mathbb{K})\cong U(\mathfrak{g})/(1 - y) U(\mathfrak{g})$, where $U(\mathfrak{g})$ is the universal enveloping algebra of the $(2n + 1 )$-dimensional Heisenberg Lie algebra with basis given by the set $\{t_1,\dots, t_n,x_1,\dots,x_n, y\}$ over $\mathbb{K}$. Li and Van Oystaeyen \cite[Example (i)]{Li1992} proved that $U(\mathfrak{g})$ is the Rees algebra of $A_n(\mathbb{K})$ with respect to the filtration $\mathcal{F}''$ on $A_n(\mathbb{K})$, where $\mathcal{F}''_p(A_n(\mathbb{K}))=\{\sum_{|\alpha|\leq
p}f_{\alpha}(t_1,\dots,t_n)x^{\alpha}\}$, $f_{\alpha}(t_1,\dots,t_n)\in \mathbb{K}[t_1,\dots,t_n]$ and $x^{\alpha}\in {\rm Mon}(A_n(\mathbb{K}))$. Notice that $\mathcal{F}''$ coincides with the filtration given in (\ref{eq1.3.1a}). Thus, by Proposition \ref{prop.relat filtr Lezama}, $A_n(\mathbb{K})\cong U(\mathfrak{g})/(1-y) U(\mathfrak{g})$ is $\sigma$-filtered with the filtration $\mathcal{F}''$.
\item [\rm (3)] Let $R = \mathbb{K}\langle t_1, t_2\rangle/(t_2t_1-t_1t_2-t_1^2)$ be the Jordan plane and $A=\sigma(R)\langle x_1\rangle$ be the skew PBW extension over $R$, where $x_1t_1=t_1x_1$ and $x_1t_2=t_2x_1+2t_1x_1$. According to Proposition \ref{sigmadefinition} we have $\sigma_1(t_1)=t_1$ and $\sigma_1(t_2) = 2t_1 + t_2$ and
$\delta_1=0$. Notice that $A$ is a $\sigma$-filtered skew PBW extension over the Jordan plane $R$, when $R$ is endowed with the standard filtration.
\item [\rm (4)] Let $R=\mathbb{K}[t]$ be the polynomial ring and $A=\sigma(R)\langle x_1\rangle$ a skew PBW extension over $R$, where $x_1t = c_1tx_1+c_2x_1+c_3t^2+c_4t+c_5$, for $c_1, c_2, c_3, c_4, c_5\in \mathbb{K}$ and $c_1\neq 0$. According to Proposition \ref{sigmadefinition}, $\sigma_1(t)= c_1t+c_2$ and
$\delta_1(t)= c_3t^2+c_4t+c_5$. Therefore $A$ is a $\sigma$-filtered skew PBW extension over $\mathbb{K}[t]$, when $\mathbb{K}[t]$ is endowed with the standard filtration. Notice that since $c_1\neq 0$ then $\sigma_1$ is an automorphism of $\mathbb{K}[t]$.
\end{enumerate}
\end{example}
\begin{remark}\label{rem.no filter}
Let $A$ be the free algebra generated by $x,y$ subject to the relation $yx = xy + x^3$, that is, $\mathbb{K}\langle x, y\rangle/ \langle yx-xy-x^3\rangle$. As one can check (following the ideas presented in \cite{AcostaLezama2015}), $A$ is a skew PBW extension over $\mathbb{K}[x]$. By Proposition \ref{sigmadefinition}, $yx=\sigma(x)y+\delta(x)$, whence $\sigma$ is the identity map of $\mathbb{K}[x]$ and $\delta(x)=x^3$. Notice that the standard filtration of $\mathbb{K}[x]$ is connected, $\sigma$ is filtered but $\delta$ is not filtered. In this way, $A$ is not $\sigma$-filtered.
\end{remark}
Let $R$ be a graded algebra. A graded $R$-module $M$ is \emph{free-graded} on the basis $\{e_j\mid j\in J\}$ if $M$ is free as a left $R$-module on the basis $\{e_j\}$, and also every $e_j$ is homogeneous, say of degree $d(j)$. A filtered module $M=\bigcup_{p\in \mathbb{N}}\mathcal{F}_p(M)$ over a filtered algebra $R=\bigcup_{p\in \mathbb{N}}\mathcal{F}_p(R)$ is \emph{free-filtered} with filtered basis $\{e_j\mid j\in J\}$ if $M$ is a free $R$-module with basis $\{e_j\mid j\in J\}$, and $\mathcal{F}_p(M) = \bigoplus_j\mathcal{F}_{p-p(j)}(R)e_j$, where $p(j)$ is the degree of $e_j$.
\begin{proposition}\label{prop.libre-filt}
If $A=\sigma(R)\langle x_1,\dots,x_n\rangle$ is a $\sigma$-filtered skew PBW extension, then $A$ is free-filtered with filtered basis ${\rm Mon}(A)$. Moreover, $G(A)$ is free-graded over $G(R)$.
\end{proposition}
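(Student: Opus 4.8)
The plan is to verify the two claims — that $A$ is free-filtered over $R$ with filtered basis ${\rm Mon}(A)$, and that $G(A)$ is free-graded over $G(R)$ — by combining Definition \ref{def.skewpbwextensions}(2) with the explicit description \eqref{eq.filtr A} of $\mathcal{F}_p(A)$ in terms of ${\rm tdeg}$. Since ${\rm Mon}(A)$ consists of the monomials $X = x^\alpha$, each carrying degree $p(X) := \deg(X) = \alpha_1 + \cdots + \alpha_n$, the task is essentially bookkeeping with the total degree.

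\begin{proof}
By Definition \ref{def.skewpbwextensions}(2), $A$ is a free left $R$-module with basis ${\rm Mon}(A)$, and each $X = x^{\alpha}\in {\rm Mon}(A)$ is assigned the degree $p(X) := \deg(X)$. We must show that $\mathcal{F}_p(A) = \bigoplus_{X\in {\rm Mon}(A)}\mathcal{F}_{p-p(X)}(R)\,X$, where by convention $\mathcal{F}_{q}(R) = 0$ for $q < 0$. Let $0\neq f\in A$ have its unique representation $f = c_1X_1+\cdots + c_dX_d$ with $c_i\in R\setminus\{0\}$ and $X_i\in {\rm Mon}(A)$ pairwise distinct (Remark \ref{rem.grade of f}(2)). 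Then $f\in \mathcal{F}_p(A)$ if and only if ${\rm tdeg}(f)\le p$, which by Definition \ref{def.total degree}(2) means ${\rm tdeg}(c_iX_i) = \deg(c_i) + p(X_i)\le p$ for every $i$; equivalently, $\deg(c_i)\le p - p(X_i)$, i.e. $c_i\in \mathcal{F}_{p-p(X_i)}(R)$ for each $i$. Since the $X_i$ are distinct basis elements, this is precisely the statement that $f$ lies in the direct sum $\bigoplus_{X}\mathcal{F}_{p-p(X)}(R)\,X$, and the case $f = 0$ is trivial. Hence $A$ is free-filtered with filtered basis ${\rm Mon}(A)$.

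For the second assertion, recall $G(A) = \bigoplus_{p\ge 0}\mathcal{F}_p(A)/\mathcal{F}_{p-1}(A)$ and $G(R) = \bigoplus_{p\ge 0}\mathcal{F}_p(R)/\mathcal{F}_{p-1}(R)$. Using the direct-sum decomposition just established together with $\mathcal{F}_{p-1}(A) = \bigoplus_X \mathcal{F}_{p-1-p(X)}(R)\,X$, we get
\begin{equation*}
\mathcal{F}_p(A)/\mathcal{F}_{p-1}(A)\;\cong\;\bigoplus_{X\in {\rm Mon}(A)}\bigl(\mathcal{F}_{p-p(X)}(R)/\mathcal{F}_{p-1-p(X)}(R)\bigr)\,\overline{X},
\end{equation*}
where $\overline{X}$ denotes the image of $X$ in degree $p(X)$ of $G(A)$. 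Summing over $p\ge 0$ yields $G(A)\cong \bigoplus_{X\in {\rm Mon}(A)}G(R)(-p(X))\,\overline{X}$ as graded left $G(R)$-modules; here the well-definedness of the $G(R)$-action follows because $A$ is a filtered $R$-module (Theorem \ref{teo.filtracion gen}), so multiplication $\mathcal{F}_q(R)\times\mathcal{F}_{p-q}(A)\to\mathcal{F}_p(A)$ descends to the associated graded level. Thus each $\overline{X}$ is homogeneous of degree $p(X)$ and $\{\overline{X}\mid X\in {\rm Mon}(A)\}$ is a $G(R)$-basis of $G(A)$, so $G(A)$ is free-graded over $G(R)$.

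The only point that requires a little care is that the passage to associated graded modules genuinely preserves the direct-sum decomposition, i.e. that no cancellation occurs between the summands $\mathcal{F}_{p-p(X)}(R)\,X$ when one quotients by $\mathcal{F}_{p-1}(A)$; this is immediate here precisely because the decomposition of $\mathcal{F}_{p-1}(A)$ is indexed by the \emph{same} basis ${\rm Mon}(A)$ and is \enquote{diagonal} in it, so the quotient splits as a direct sum of the individual quotients $\mathcal{F}_{p-p(X)}(R)/\mathcal{F}_{p-1-p(X)}(R)$. No further obstacle arises.
\end{proof}
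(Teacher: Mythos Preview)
Your proof is correct. The first half---showing $\mathcal{F}_p(A)=\bigoplus_{X\in{\rm Mon}(A)}\mathcal{F}_{p-p(X)}(R)\,X$ via the unique representation and the definition of ${\rm tdeg}$---is exactly the paper's argument, just packaged as a single biconditional rather than two separate inclusions.

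The difference lies in the second assertion. The paper simply invokes \cite[Proposition 7.6.15]{McConnell}, which states in general that if a filtered module is free-filtered then its associated graded module is free-graded on the images of the basis elements. You instead reprove this fact in situ: you use the diagonal decomposition of both $\mathcal{F}_p(A)$ and $\mathcal{F}_{p-1}(A)$ over the same index set ${\rm Mon}(A)$ to split the quotient, and then sum over $p$ to identify $G(A)\cong\bigoplus_X G(R)(-p(X))\,\overline{X}$. This is a perfectly valid and self-contained substitute for the citation; it makes the argument independent of McConnell--Robson at the cost of a few extra lines. Your remark that the $G(R)$-action is well defined because $A$ is a filtered $R$-module (Theorem \ref{teo.filtracion gen}) is the right justification, and your closing paragraph correctly isolates the one point where care is needed.
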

\begin{proof}
By Definition \ref{def.skewpbwextensions} (2), we have that $A$ is a free $R$-module with basis ${\rm Mon}(A)= \{x^{\alpha}=x_1^{\alpha_1}\cdots x_n^{\alpha_n}\mid \alpha=(\alpha_1,\dots ,\alpha_n)\in \mathbb{N}^n\}$. If the degree of $x^{\alpha}$ is denoted by $\deg(x^{\alpha}):=|\alpha|$, the idea is to show that 
\begin{equation}\label{eq.fil libre fil}
\mathcal{F}_p(A)= \bigoplus_{\alpha\in
\mathbb{N}^n}\mathcal{F}_{p-|\alpha|}(R)x^{\alpha}.
\end{equation}
Let $0\neq f\in \mathcal{F}_p(A)$. By Remark \ref{rem.grade of f} (2), $f$ has a unique representation given by $f=c_1X_1+\cdots+c_dX_d$, with $c_i\in R\ \backslash\ \{0\}$ and $X_i:=x^{\alpha^{i}}= x_1^{\alpha^i_{1}}\cdots x_n^{\alpha^i_{n}}\in {\rm Mon}(A)$,  where ${\rm tdeg}(c_iX_i) = {\rm tdeg}(c_ix^{\alpha^{i}})=\deg(c_i)+\deg(X_i)=\deg(c_i)+|\alpha^{i}|\leq p$, for $1\leq i\leq d$. Hence, $\deg(c_i)\leq p-|\alpha^{i}|$, i.e. $c_i\in \mathcal{F}_{ p-|\alpha^{i}|}(R)$, for $1\leq i\leq d$, and so $f\in \mathcal{F}_{p-|\alpha^{1}|}(R)x^{\alpha^1}+\cdots+\mathcal{F}_{p-|\alpha^{d}|}(R)x^{\alpha^d}$. From the uniqueness of the representation of $f$, it follows that $f\in \bigoplus_{\alpha\in \mathbb{N}^n}\mathcal{F}_{p-|\alpha|}(R)x^{\alpha}$. 

\vspace{0.2cm}

For the other inclusion, let $f\in \bigoplus_{\alpha\in \mathbb{N}^n}\mathcal{F}_{p-|\alpha|}(R)x^{\alpha}$. Then $f$ has a unique representation as $f= f_{\alpha^1}+\cdots + f_{\alpha^t}$, where $f_{\alpha^j}\in \mathcal{F}_{p-|\alpha^j|}(R)x^{\alpha^j}$, for $1\leq j\leq t$. Thus, $f_{\alpha^j}= r_{j}x^{\alpha^j}$, with
$r_j\in \mathcal{F}_{p-|\alpha^j|}$ and $x^{\alpha^j}\in {\rm Mon}(A)$. In this way, $\deg(r_j)\leq p-|\alpha^j|$, and therefore ${\rm tdeg}(f_{\alpha^j})=\deg(r_j)+|\alpha^j|\leq p$, whence ${\rm tdeg}(f)\leq p$, that is, $f\in \mathcal{F}_p(A)$. This means that $A$ is free-filtered with filtered basis ${\rm Mon}(A)$. Finally, since $A$ free-filtered, by \cite[Proposition 7.6.15]{McConnell}, we obtain that $G(A)$ is free-graded over $G(R)$ on the graded basis $\overline{{\rm Mon}(A)}$.
\end{proof}
\section{The homogenization of a \texorpdfstring{$\sigma -$}\ filtered skew PBW extension}\label{sect.homogeniza}
\begin{proposition}\label{prop.R f. pres impl A f. pres}
Let $A=\sigma(R)\langle x_1,\dots,x_n\rangle$ be a skew PBW extension over an algebra $R$.
\begin{enumerate}
\item[\rm (1)] If $R$ is finitely generated as algebra, then $A$ is finitely generated as algebra.
\item[\rm (2)] If $R$ is finitely presented, then $A$ is finitely
presented.
\end{enumerate}
\end{proposition}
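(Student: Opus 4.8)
The plan is to build a presentation of $A$ directly from a presentation of $R$ together with the commutation rules that define the skew PBW extension, handling part (1) first since (2) is an elaboration of it. Suppose $R = \mathbb{K}\langle t_1,\dots,t_m\rangle/I$ with $I$ generated by $\{f_1,\dots,f_s\}$ (for part (1), just take any finite generating set $t_1,\dots,t_m$ of $R$ as an algebra, so $R$ is a quotient of the free algebra on these generators). The first step is to observe that, by Definition \ref{def.skewpbwextensions}(2), $A$ is a free $R$-module on ${\rm Mon}(A)$, so as a $\mathbb{K}$-algebra $A$ is generated by $t_1,\dots,t_m,x_1,\dots,x_n$: indeed every monomial $x^\alpha$ is a product of the $x_i$, and every coefficient in $R$ is a polynomial in the $t_k$, so the set $\{t_1,\dots,t_m,x_1,\dots,x_n\}$ spans $A$ together with $1$. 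This proves (1).

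For (2), I would exhibit an explicit finite set of defining relations. Let $F = \mathbb{K}\langle t_1,\dots,t_m,x_1,\dots,x_n\rangle$ and let $\pi\colon F \to A$ be the surjection from the previous paragraph. The candidate relations are: (a) the lifts $f_1,\dots,f_s$ of the relations of $R$ (viewed in $F$ via the inclusion $\mathbb{K}\langle t_1,\dots,t_m\rangle \hookrightarrow F$); (b) for each $1\le i\le n$ and each generator $t_k$, the relation coming from Definition \ref{def.skewpbwextensions}(3), namely $x_it_k - c_{i,t_k}x_i - (x_it_k - c_{i,t_k}x_i) \in R$ rewritten as $x_it_k - \sigma_i(t_k)x_i - \delta_i(t_k)$, where $\sigma_i(t_k)$ and $\delta_i(t_k)$ are expressed as fixed polynomials in $t_1,\dots,t_m$; and (c) for each $1\le i<j\le n$, the relation from \eqref{sigmadefinicion2}, i.e. $x_jx_i - d_{i,j}x_ix_j - r_{0_{j,i}} - r_{1_{j,i}}x_1 - \cdots - r_{n_{j,i}}x_n$, again with the $R$-coefficients written as polynomials in the $t_k$. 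This is a finite set. Let $J \subseteq F$ be the two-sided ideal it generates; clearly $J \subseteq \ker\pi$, and it remains to show the reverse inclusion.

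The heart of the argument is that $F/J$ is spanned over $\mathbb{K}$ by the images of the elements $t^\gamma x^\alpha$ (monomials in the $t_k$ times basic monomials in the $x_i$): the relations (b) let one push every $x_i$ to the right past every $t_k$, the relations (c) let one reorder the $x_i$ into the standard order $x_1^{\alpha_1}\cdots x_n^{\alpha_n}$ (using induction on degree, since the right-hand side of (c) has lower $x$-degree), and (a) lets one reduce the $t$-part modulo the relations of $R$. Hence the natural surjection $F/J \twoheadrightarrow A$ sends a spanning set of $F/J$ onto the $R$-basis ${\rm Mon}(A)$ of $A$ (with $R$-coefficients reduced modulo $I$); since ${\rm Mon}(A)$ together with an $R$-basis is actually a $\mathbb{K}$-basis of $A$ by freeness, the surjection $F/J \to A$ carries a spanning set bijectively to a basis, so it is injective, giving $J = \ker\pi$ and $A \cong F/J$ finitely presented.

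The main obstacle I expect is making the rewriting/normal-form argument in the last paragraph fully rigorous — specifically, showing that the reduction of an arbitrary element of $F$ to $\mathbb{K}$-linear combinations of the $t^\gamma x^\alpha$ terminates and that no further $\mathbb{K}$-linear relations among these images are forced by $J$. The termination is handled by a degree/ordering argument (reorder in the $x$-variables decreases a suitable monomial order, bounded below), and the ``no extra relations'' part is exactly where one uses that $A$ is \emph{free} over $R$ on ${\rm Mon}(A)$: the composite $F/J \to A$ is injective on the spanning set because its image is a basis, which is the standard trick for verifying that a candidate presentation is correct. One should also record that, since the generators $t_i$ of $R$ may have been assumed of degree $1$ with nonlinear relations, the homogenization machinery of Section \ref{sect.homogeniza} applies to $A$ as well; but for the bare statement of the proposition, only the finiteness of the relation set above is needed.
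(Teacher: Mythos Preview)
Your approach is essentially the same as the paper's: both write $A$ as the quotient of $\mathbb{K}\langle t_1,\dots,t_m,x_1,\dots,x_n\rangle$ by the ideal generated by the relations $r_1,\dots,r_s$ of $R$, the commutation relations $f_{ik}=x_it_k-\sigma_i(t_k)x_i-\delta_i(t_k)$, and the reordering relations $g_{ji}$ from \eqref{sigmadefinicion2}. In fact you go further than the paper, which simply records this finite relation set and stops; your rewriting/normal-form argument (pushing $x_i$'s past $t_k$'s, reordering the $x$'s, reducing the $t$-part mod $I$, then invoking the $R$-freeness of $A$ on ${\rm Mon}(A)$ to conclude $F/J\to A$ is injective) is exactly the verification that the paper leaves implicit.
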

\begin{proof} 
\begin{enumerate}
\item[\rm (1)] If $R$ is finitely generated as algebra, then there exists a finite set of elements $t_1,\dots, t_s\in R$ such that the set $\{t_{i_1}t_{i_2}\cdots t_{i_m}\mid 1\leq i_j\leq s, m\geq 1\} \cup \{1\}$ spans $R$ as a $\mathbb{K}$-space. By Definition \ref{def.skewpbwextensions} (2), ${\rm Mon}(A)=
\{x^{\alpha}=x_1^{\alpha_1}\cdots x_n^{\alpha_n}\mid \alpha=(\alpha_1,\dots ,\alpha_n)\in \mathbb{N}^n\}$ is an $R$-basis for $A$. There exists a finite set of elements $t_1,\dots, t_s, x_1, \dots, x_n\in A$ such that the set $\{t_{i_1}t_{i_2}\cdots t_{i_m}x_1^{\alpha_1}\cdots x_n^{\alpha_n}\mid 1\leq i_j\leq s, m\geq 1,\alpha_1,\dots, \alpha_n\in \mathbb{N}\}$ spans $A$ as a $\mathbb{K}$-space.

\item[\rm (2)] If $R$ is finitely presented, then $R= \mathbb{K}\langle
t_1,\dots, t_m\rangle/I$, where
    \begin{equation}\label{rel1.R}
I=\langle r_1,\dots, r_s\rangle
\end{equation}
is a two-sided ideal of $\mathbb{K}\langle t_1, \dots, t_m\rangle$
generated by a finite set $r_1,\dots, r_s$ of polynomials in
$\mathbb{K}\langle t_1, \dots, t_m\rangle$. In this way,
\begin{align}\label{eq1.repr of A}
\begin{split}
A= &\ \mathbb{K}\langle t_1, \dots, t_m, x_{1}, \dots,
x_{n}\rangle/J,
\text{ where}\\
J= &\ \langle r_1,\dots, r_s, \ f_{ik}, \ g_{ji} \mid 1\leq i,j\leq
n,\  1\leq k\leq m\rangle
\end{split}
\end{align}
  is the two-sided ideal of $\mathbb{K}\langle t_1, \dots, t_m, x_{1}, \dots, x_{n}\rangle$ generated by a finite set of polynomials  $r_1,\dots, r_s$,
$f_{ik}$, $g_{ji}$ with $r_1,\dots, r_s$ as in (\ref{rel1.R}), that is, 
 \begin{equation}\label{eq1.rel xt}
 f_{ik}:= x_{i}t_k-\sigma_{i}(t_k)x_{i}-\delta_{i}(t_k)
 \end{equation}
 where $\sigma_{i}$ and $\delta_{i}$ are as in Proposition \ref{sigmadefinition}, i.e.
  \begin{equation}\label{eq1.rel xx}
 g_{ji}:= x_{j}x_{i}-c_{i,j}x_{i}x_{j} - (r_{0_{j,i}} + r_{1_{j,i}}x_{1} + \cdots + r_{n_{j,i}}x_{n})
 \end{equation}
 as in  (\ref{eq.rep variables x}). Therefore, $A$ is finitely
 presented.
 \end{enumerate}
\end{proof}
\begin{proposition}\label{prop.filtracion fin pres}
If $A$ is a skew PBW extension over a finitely presented algebra $R$ such that $\sigma_i$, $\delta_i$ are filtered, and $A$ preserves ${\rm tdeg}$, then $A$ is a connected $\sigma$-filtered algebra, and the filtration of $A$ is finite.
\end{proposition}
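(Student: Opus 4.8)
The plan is to deduce the statement from results already established, the only genuinely new input being an inspection of the standard filtration on a finitely presented algebra. Recall that $R = \mathbb{K}\langle t_1,\dots,t_m\rangle/I$ carries the \emph{standard filtration} $\{\mathcal{F}_p(R)\}_{p\in\mathbb{N}}$, namely the image under the canonical projection of the standard filtration $\{\mathcal{F}_q(L)\}_{q\in\mathbb{N}}$, $\mathcal{F}_q(L)=\bigoplus_{p\le q}L_p$, on $L=\mathbb{K}\langle t_1,\dots,t_m\rangle$; this is the filtration with respect to which the hypothesis ``$\sigma_i$, $\delta_i$ are filtered'' is to be read. First I would check two properties of this filtration. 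For finiteness: $\mathcal{F}_q(L)$ is spanned by the words of length $\le q$ in $m$ letters, hence is finite dimensional, so its homomorphic image $\mathcal{F}_p(R)$ is finite dimensional as well, and the standard filtration on $R$ is finite. For connectedness: by the standing conventions recalled in Section~\ref{sect.prelim} the generators $t_i$ all have degree $1$ and no defining relation of $R$ is linear, so $I\subseteq \bigoplus_{p\ge 2}L_p$ and the degree-$0$ component is unaffected; thus $\mathcal{F}_0(R)=\mathbb{K}$ and $R$ is connected filtered.

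With these two facts in hand the proof is a short chain of citations. Since $\sigma_i$ and $\delta_i$ are filtered for all $i$ and $A$ preserves ${\rm tdeg}$, Theorem~\ref{teo.filtracion gen} shows that $\{\mathcal{F}_p(A)\}_{p\in\mathbb{N}}$ given by~(\ref{eq.filtr A}) is a filtration on $A$, and hence, by Definition~\ref{def.filt skew}, $A$ is a $\sigma$-filtered skew PBW extension over $R$. Because $R$ is connected filtered, Proposition~\ref{prop.connected filt} gives that $A$ is connected filtered; because the filtration of $R$ is finite, Proposition~\ref{prop.filt finita}(2) gives that the filtration of $A$ is finite. Putting these together, $A$ is a connected $\sigma$-filtered algebra with finite filtration, as claimed.

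The main obstacle is mild and lies entirely in the very first step: making explicit that a finitely presented $R$ is to be equipped with its standard filtration and verifying that this filtration is both connected and finite. This is exactly where the standing hypotheses of Section~\ref{sect.prelim} (finitely many degree-one generators, no linear relations) are used, and it is worth flagging that the proposition does not itself name a filtration on $R$. Everything beyond this is a direct appeal to Theorem~\ref{teo.filtracion gen} and Propositions~\ref{prop.connected filt} and~\ref{prop.filt finita}, with no further computation required.
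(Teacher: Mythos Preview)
Your proof is correct and follows essentially the same approach as the paper's own proof: equip $R$ with the standard filtration inherited from the free algebra, observe that this filtration is connected and finite, and then invoke Theorem~\ref{teo.filtracion gen}, Proposition~\ref{prop.connected filt}, and Proposition~\ref{prop.filt finita}(2) in turn. Your only addition is to spell out more carefully why the inherited filtration is connected (using the standing conventions on generators and relations) and to flag explicitly that the proposition tacitly assumes the standard filtration on $R$; the paper's argument is the same but slightly more telegraphic on these points.
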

\begin{proof}
Let $A=\sigma(R)\langle x_1,\dots, x_n\rangle$ be a skew PBW extension over a finitely presented algebra $R= \mathbb{K}\langle t_1,\dots, t_m\rangle/\langle r_1,\dots, r_s\rangle$. As
$L=\mathbb{K}\langle t_1,\dots, t_m\rangle$ is connected filtered, $R$ inherits a connected filtration $\{\mathcal{F}_q(R)\}_{q\in \mathbb{N}}$, from the standard filtration on the free algebra $\mathbb{K}\langle t_1,\dots, t_m\rangle$. Since $\sigma_i$ and $\delta_i$ are filtered and $A$ preserves ${\rm tdeg}$, then $A$ is $\sigma$-filtered. Now, as $R$ is connected filtered, Proposition \ref{prop.connected filt} implies that $A$ is connected filtered. Notice that
$L_p$ is a finite dimensional subspace of $L$, for all $p\in
\mathbb{N}$. In this way, $\mathcal{F}_q(L)$ is also a finite dimensional
subspace, and so $\mathcal{F}_q(R)$ is a finite dimensional subspace
of $R$, for all $q\in \mathbb{N}$, i.e. the filtration of $R$ is finite. Proposition \ref{prop.filt finita} (2) guarantees that the filtration of $A$ is finite.
\end{proof}
\begin{theorem}\label{teo. homogenization} If $A=\sigma(R)\langle x_1,\dots, x_n\rangle$ is a
bijective skew PBW extension over a finitely presented algebra $R$ such that $\sigma_i$, $\delta_i$ are filtered and $A$ preserves ${\rm tdeg}$, then $H(A)$ is a graded skew PBW extension over $H(R)$.
\end{theorem}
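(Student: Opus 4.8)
The plan is to realize $H(A)$ concretely as $\sigma(H(R))\langle x_1,\dots,x_n\rangle$, check Definition \ref{def.skewpbwextensions} for it over $H(R)$, and then verify the extra conditions of Definition \ref{def. graded skew PBW ext}. By Proposition \ref{prop.R f. pres impl A f. pres} both $R$ and $A$ are finitely presented and $A$ carries the presentation (\ref{eq1.repr of A}); homogenizing all of its defining relations and adjoining the central indeterminate $z$ gives a presentation of $H(R)$ with generators $t_1,\dots,t_m,z$ and relations $\widehat{r}_1,\dots,\widehat{r}_s$, $zt_k-t_kz$, and a presentation of $H(A)$ with generators $t_1,\dots,t_m,x_1,\dots,x_n,z$ and relations $\widehat{r}_l$, $\widehat{f}_{ik}$, $\widehat{g}_{ji}$, $zt_k-t_kz$, $zx_i-x_iz$. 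In particular $z$ is central in $H(A)$ and the obvious map $H(R)\to H(A)$ is an algebra homomorphism.

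Next I would read off the commutation rules with their degrees. Since $t_k\in\mathcal{F}_1(R)$ and $\sigma_i,\delta_i$ are filtered, $\sigma_i(t_k)\in\mathcal{F}_1(R)$ and $\delta_i(t_k)\in\mathcal{F}_2(R)$, so $f_{ik}=x_it_k-\sigma_i(t_k)x_i-\delta_i(t_k)$ has degree $2$ (its degree-$2$ part contains $x_it_k$), whence $\widehat{f}_{ik}$ produces in $H(A)$ a relation $x_it_k=p_{ik}x_i+q_{ik}$ with $p_{ik}\in H(R)_1$ and $q_{ik}\in H(R)_2$, while $zx_i=x_iz$. Likewise, because $R$ inherits a connected filtration from its presentation (so $\mathcal{F}_0(R)=\mathbb{K}$) and $A$ preserves ${\rm tdeg}$, comparing total degrees in (\ref{eq.rep variables x}) forces $d_{i,j}\in\mathbb{K}\setminus\{0\}$, $\deg(r_{0_{j,i}})\le 2$ and $\deg(r_{l_{j,i}})\le 1$, so $\widehat{g}_{ji}$ gives $x_jx_i=d_{i,j}x_ix_j+u_0+\sum_l u_lx_l$ with $u_0\in H(R)_2$ and $u_l\in H(R)_1$. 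More generally, the computation in the proof of Theorem \ref{teo.filtracion gen} shows that in $A$ one has $x_i\,x^{\alpha}=\sum_{\beta}c^{(i,\alpha)}_{\beta}x^{\beta}$ with $c^{(i,\alpha)}_{\beta}\in\mathcal{F}_{|\alpha|+1-|\beta|}(R)$; homogenizing each coefficient to that degree gives a well-defined straightening formula for $x_i\cdot x^{\alpha}$ with coefficients in $H(R)$.

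The heart of the proof is that ${\rm Mon}:=\{x^{\alpha}\mid\alpha\in\mathbb{N}^n\}$ is a free basis of $H(A)$ as a left $H(R)$-module. Spanning follows from the relations above by pushing every $x_i$ to the right past the $t_k$'s and $z$ and then reordering the $x_i$'s, a procedure that terminates because each step preserves total degree and does not increase the $x$-degree. For freeness I would use the regular-representation argument: on the free left $H(R)$-module $F:=\bigoplus_{\alpha}H(R)x^{\alpha}$ let $t_k$ and $z$ act by left multiplication in $H(R)$ and let $x_i$ act by $x_i\cdot(\rho x^{\alpha}):=\overline{\sigma}_i(\rho)(x_i\cdot x^{\alpha})+\overline{\delta}_i(\rho)x^{\alpha}$, where $x_i\cdot x^{\alpha}$ is the homogenized straightening formula and $\overline{\sigma}_i,\overline{\delta}_i$ are the homogenized versions of $\sigma_i,\delta_i$ on $H(R)$; then one checks that all the defining relations $\widehat{r}_l$, $\widehat{f}_{ik}$, $\widehat{g}_{ji}$, $zt_k-t_kz$, $zx_i-x_iz$ act as zero on $F$. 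This verification reduces to the combinatorial identities that already hold because $A=\sigma(R)\langle x_1,\dots,x_n\rangle$ is a skew PBW extension over $R$, now transported into $H(R)$, and it is precisely here that the hypotheses that $\sigma_i,\delta_i$ are filtered and that $A$ preserves ${\rm tdeg}$ enter, keeping all homogenized coefficients in the correct degrees so that the action is well defined and graded. Granting this, $1\in F$ generates $F$, and $\sum_{\alpha}\rho_{\alpha}x^{\alpha}=0$ in $H(A)$ forces, after applying the corresponding operator to $1$, $\sum_{\alpha}\rho_{\alpha}x^{\alpha}=0$ in $F$, hence each $\rho_{\alpha}=0$. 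In particular $H(R)\hookrightarrow H(A)$ shares the identity, and $H(A)$ satisfies Definition \ref{def.skewpbwextensions}(1)--(4) over $H(R)$; thus $H(A)=\sigma(H(R))\langle x_1,\dots,x_n\rangle$, with associated maps $\overline{\sigma}_i,\overline{\delta}_i$ as in Proposition \ref{sigmadefinition} (note $zx_i=x_iz$ forces $\overline{\sigma}_i(z)=z$, $\overline{\delta}_i(z)=0$, and comparison with $x_it_k=p_{ik}x_i+q_{ik}$ gives $\overline{\sigma}_i(t_k)=p_{ik}$, $\overline{\delta}_i(t_k)=q_{ik}$).

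Finally, to see this is a graded skew PBW extension: $H(R)$ is $\mathbb{N}$-graded; each $\overline{\sigma}_i$ carries the degree-one generators $t_k,z$ of $H(R)$ into $H(R)_1$, hence is a graded homomorphism of degree $0$, and it is bijective — injective because $\sigma_i$ is, and surjective because each $\mathcal{F}_p(R)$ is finite dimensional (as $R$ is finitely presented) and $\sigma_i$ restricts to an injective, hence bijective, endomorphism of it, so $\overline{\sigma}_i$ is onto every $H(R)_p$. Since $\overline{\delta}_i(t_k)\in H(R)_2$ and $\overline{\delta}_i(z)=0$, the map $\overline{\delta}_i\colon H(R)(-1)\to H(R)$ is a graded $\overline{\sigma}_i$-derivation of degree $+1$. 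Together with $d_{i,j}\in\mathbb{K}\setminus\{0\}$ invertible and $x_jx_i-d_{i,j}x_ix_j\in H(R)_2+H(R)_1x_1+\cdots+H(R)_1x_n$, these are exactly conditions (1) and (2) of Proposition \ref{prop.grad A}, so by Definition \ref{def. graded skew PBW ext} the extension $H(A)$ over $H(R)$ is graded. The step I expect to be the main obstacle is the well-definedness verification for the regular representation in the third paragraph; the remainder is degree bookkeeping built on the filtered and ${\rm tdeg}$-preserving hypotheses.
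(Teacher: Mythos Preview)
Your approach is essentially the paper's: homogenize the presentation of $A$ coming from Proposition~\ref{prop.R f. pres impl A f. pres}, read off from $\widehat{f}_{ik}$ and $\widehat{g}_{ji}$ the commutation rules (3) and (4) of Definition~\ref{def.skewpbwextensions} for $H(A)$ over $H(R)$, and then verify the graded conditions of Proposition~\ref{prop.grad A}. The paper carries out exactly these steps, with the same degree bookkeeping showing $\widehat{\sigma}_i(t_k)\in H(R)_1$, $\widehat{\delta}_i(t_k)\in H(R)_2$, $d_{i,j}\in H(R)_0$, and the coefficients of the $x_l$ in $\widehat{g}_{ji}$ lying in $H(R)_1$.

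The one substantive difference is the freeness of $H(A)$ over $H(R)$ on ${\rm Mon}(A)$. The paper simply asserts ``$H(R)\subseteq H(A)$ and $H(A)$ is an $H(R)$-free module with basis ${\rm Mon}(A)$'' and moves on; you instead sketch a regular-representation construction on $\bigoplus_{\alpha}H(R)x^{\alpha}$, using the homogenized straightening formulas, to force linear independence. Your version is more scrupulous here, and your care about the bijectivity of $\overline{\sigma}_i$ (using that the filtration pieces $\mathcal{F}_p(R)$ are finite dimensional, so that an injective filtered $\sigma_i$ is automatically bijective on each level and hence $\sigma_i^{-1}$ is filtered) is likewise more detailed than the paper's one-line ``As $\sigma_i$ is bijective then $\widehat{\sigma_i}$ is bijective.'' In short: same architecture, with the paper leaving the $H(R)$-freeness as an unproved observation and you supplying an argument for it.
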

\begin{proof}
Consider the ring $R= \mathbb{K}\langle t_1,\dots,
t_m\rangle/\langle r_1,\dots, r_s\rangle= L/\langle r_1,\dots,
r_s\rangle$ and let $H(R) = L[z]/\langle\widehat{r}_1, \dots,
\widehat{r}_s\rangle=\mathbb{K}\langle t_1,\dots,
t_m,z\rangle/\langle \widehat{r}_1, \dots, \widehat{r}_s,\
t_kz-zt_k\mid 1\leq k\leq m\rangle$ be the homogenization of $R$. By Proposition \ref{prop.R f. pres impl A f. pres} (2) and its proof, $A$ is finitely presented with presentation
\begin{align}\label{eq. pres completa A}
A = \frac{\mathbb{K}\langle t_1, \dots, t_m, x_{1}, \dots,
x_{n}\rangle}{\langle r_1,\dots, r_s,\ \ f_{ik}, \ g_{ji}\mid 1\leq
i,j\leq n,\  1\leq k\leq m\rangle},
\end{align}
where $\ f_{ik}= x_{i}t_k-\sigma_{i}(t_k)x_{i}-\delta_{i}(t_k)$,
$g_{ji}= x_{j}x_{i}-c_{i,j}x_{i}x_{j} - (r_{0_{j,i}} +
r_{1_{j,i}}x_{1} + \cdots + r_{n_{j,i}}x_{n})$, with
$c_{i,j},r_{0_{j,i}}, r_{1_{j,i}}, \dots, r_{n_{j,i}}\in R$. Let
$L_{tx}:=\mathbb{K}\langle t_1, \dots, t_m, x_{1}, \dots,
x_{n}\rangle$. Then
\begin{align}\label{eq.rep H(A)}
H(A) = &\ L_{tx}[z]/\langle\widehat{r}_1, \dots, \widehat{r}_s,\
\widehat{f_{ik}}, \ \widehat{g_{ji}}\mid 1\leq i,j\leq n,\  1\leq
k\leq m\rangle \\ = &\ \frac{\mathbb{K}\langle z, t_1, \dots, t_m,
x_{1}, \dots, x_{n}, \rangle}{\langle\widehat{r}_1, \dots,
\widehat{r}_s,\ \widehat{f_{ik}}, \ \widehat{g_{ji}}, t_kz-zt_k,
x_iz-zx_i\mid 1\leq i,j\leq n,\  1\leq k\leq m\rangle}.
\end{align}
By Proposition \ref{prop.filtracion fin pres}, $A$ is a connected $\sigma$-filtered algebra, whence $\sigma_{i}$ and $\delta_{i}$ are
filtered and $A$ preserves ${\rm tdeg}$. Hence,
\begin{align*}
{\rm tdeg}(f_{ik})&=
{\rm tdeg}(x_{i}t_k-\sigma_{i}(t_k)x_{i}-\delta_{i}(t_k))=2,\\
{\rm tdeg}(g_{ji})&= {\rm tdeg}(x_{j}x_{i}-c_{i,j}x_{i}x_{j} -
(r_{0_{j,i}} + r_{1_{j,i}}x_{1} + \cdots + r_{n_{j,i}}x_{n})) = 2.
\end{align*}

\vspace{0.2cm}

Notice that 
\begin{align}\label{eq.rep f gorro}
\widehat{f_{ik}}&=x_{i}t_k-(\sigma_{i}(t_k)z^{1-\deg(\sigma_{i}(t_k))})x_{i}-\delta_{i}(t_k)z^{2-\deg(\delta_{i}(t_k))},\\
\widehat{g_{ji}}&=x_{j}x_{i}-c_{i,j}x_{i}x_{j} -
r_{0_{j,i}}z^{2-\deg(r_{0_{j,i}})} -
(r_{1_{j,i}}z^{1-\deg(r_{1_{j,i}})})x_{1} - \cdots -
(r_{n_{j,i}}z^{1-\deg(r_{n_{j,i}})})x_{n},
\end{align}
where $c_{i,j}\in \mathbb{K}\setminus\{0\}$,
$r_{0_{j,i}}z^{2-\deg(r_{0_{j,i}})},\
r_{1_{j,i}}z^{1-\deg(r_{1_{j,i}})}, \dots,
r_{n_{j,i}}z^{1-\deg(r_{n_{j,i}})}\in H(R)$, for $1\leq i,j\leq
n$.

\vspace{0.2cm}

From (\ref{eq.rep f gorro}), in $H(A)$ we have the relations 
\begin{align}\label{eq.relac iii en H(A)}
x_{i}t_k=(\sigma_{i}(t_k)z^{1-\deg(\sigma_{i}(t_k))})x_{i}+\delta_{i}(t_k)z^{2-\deg(\delta_{i}(t_k))}
\end{align}
and
\begin{align}\label{eq.relac iv en H(A)}
 x_{j}x_{i}-c_{i,j}x_{i}x_{j} =
r_{0_{j,i}}z^{2-\deg(r_{0_{j,i}})} +
(r_{1_{j,i}}z^{1-\deg(r_{1_{j,i}})})x_{1} + \cdots +
(r_{n_{j,i}}z^{1-\deg(r_{n_{j,i}})})x_{n},
\end{align}
where $c_{i,j}\in \mathbb{K}\ \backslash\ \{0\}$,
$r_{0_{j,i}}z^{2-\deg(r_{0_{j,i}})},\
r_{1_{j,i}}z^{1-\deg(r_{1_{j,i}})}, \dots,
r_{n_{j,i}}z^{1-\deg(r_{n_{j,i}})}\in H(R)$, for $1\leq i,j\leq
n$.

\vspace{0.2cm}

Relations given in (\ref{eq.relac iii en H(A)}) and (\ref{eq.relac iv en H(A)}) correspond to Definition \ref{def.skewpbwextensions} (3) and (4), respectively, applied to $H(R)$ and $H(A)$. Notice that $H(R)\subseteq H(A)$ and $H(A)$ is an $H(R)$-free module with basis
\begin{center}
${\rm Mon}(A):= {\rm
Mon}\{x_1,\dots,x_n\}:=\{x^{\alpha}=x_1^{\alpha_1}\cdots
x_n^{\alpha_n}\mid\alpha=(\alpha_1,\dots ,\alpha_n)\in
\mathbb{N}^n\}$,
\end{center}
whence $H(A)$ is a skew PBW extension over $H(R)$ in the variables $x_1,\dots, x_n$. Thus $H(A)\cong\sigma(H(R))\langle x_1,\dots,
x_n\rangle$. Notice that $H(R)$ is a connected graded algebra, i.e.
$R(H)=\bigoplus_{p\geq 0}H(R)_p$. From (\ref{eq.relac iii en H(A)})
and applying Proposition \ref{sigmadefinition} to $H(A)$, it follows
that $\widehat{\sigma_i}(t_k)=
(\sigma_{i}(t_k)z^{1-\deg(\sigma_{i}(t_k))})$;
$\widehat{\sigma_i}(z)=z$; $\widehat{\delta_i}(t_k)=
\delta_{i}(t_k)z^{2-\deg(\delta_{i}(t_k))}$ and
$\widehat{\delta_i}(z)=0$, for $1\leq i\leq n$. Thus,
$\widehat{\sigma_i}: H(R)\to H(R)$ is a graded ring homomorphism and
$\widehat{\delta_i}: H(R)(-1)\to H(R)$ is a graded
$\widehat{\sigma_i}$-derivation for all $1\leq i \leq n$. Now, from
(\ref{eq.relac iv en H(A)}) we have that $c_{i,j}\in
\mathbb{K}\setminus\{0\}$, $r_{0_{j,i}}z^{2-\deg(r_{0_{j,i}})}\in
H(R)_2$,\ $r_{1_{j,i}}z^{1-\deg(r_{1_{j,i}})}, \dots,
r_{n_{j,i}}z^{1-\deg(r_{n_{j,i}})}\in H(R)_1$, for $1\leq i,j\leq
n$. Thus, $x_jx_i-c_{i,j}x_ix_j\in H(R)_2+H(R)_1x_1 +\cdots +
H(R)_1x_n$, and $c_{i,j}\in H(R)_0=\mathbb{K}$. As $\sigma_i$ is
bijective then $\widehat{\sigma_i}$ is bijective and as $c_{i,j}$ is
invertible, then $H(A)$ is bijective. Therefore, $H(A)=\sigma(H(R))\langle x_1,\dots, x_n\rangle$ is a bijective skew PBW extension over the $\mathbb{N}$-graded algebra $R(H)$ that satisfies both conditions formulated in Proposition \ref{prop.grad A}, and so $H(A)$ is a graded skew PBW extension over $H(R)$.
\end{proof}
\begin{remark}\label{rem.pres f in H(A)} Let $A$ as in Theorem \ref{teo. homogenization}. By (\ref{eq.rep
H(A)}),
\[
H(A) = \frac{\mathbb{K}\langle z, t_1, \dots, t_m, x_{1}, \dots,
x_{n}, \rangle}{\langle\widehat{r}_1, \dots, \widehat{r}_s,\
\widehat{f_{ik}}, \ \widehat{g_{ji}}, t_kz-zt_k, x_iz-zx_i\mid 1\leq
i,j\leq n,\  1\leq k\leq m\rangle}.
\]
Let $f\in H(A)$. By Theorem \ref{teo. homogenization}, $H(A)=\sigma(H(R))\langle x_1,\dots, x_n\rangle$ is a graded skew PBW extension over $H(R)$. Remark \ref{rem.grade of f} (2) shows that $f$ has a unique representation as $f=c_1X_1+\cdots+c_dX_d$,\  with
\ $c_i\in H(R)\ \backslash\ \{0\}$ and  $X_i\in {\rm Mon}(A)$. As
$H(A)$ is graded, $c_iX_i$ is homogeneous in $H(A)$, let us say of
degree $d(i)$, and $c_i$ is homogeneous in $H(R)$. Then $c_iX_i=k_iz^{\beta_i}\overline{t_{i_1}t_{i_2}\dots\ t_{i_p}}X_i$, $0\leq i_j\leq m$, where $\overline{t_{i_1}t_{i_2}\dots\ t_{i_p}}\in R = \mathbb{K}\langle t_1, \dots, t_m\rangle/\langle r_1,\dots, r_s\rangle$, $\overline{t_0}=x_0=z^0=1$, $k_i\in \mathbb{K}$ and $\beta_i+p+\deg(X_i)=d(i)$. Therefore $f$ has a unique representation as $f=k_1z^{\beta_1}\overline{t_{1_1}t_{1_2}\dots\ t_{1_p}}X_1+\cdots+ k_dz^{\beta_d}\overline{t_{d_1}t_{d_2}\dots\ t_{d_p}}X_d$.
\end{remark}
Let $A$ be a bijective $\sigma$-filtered skew PBW extension over a finitely presented algebra $R$. Let us fix the notation: $G(A)$ and ${\rm Rees}(A)$ is respectively the associated graded algebra  and the associated Rees algebra of $A$ regarding the filtration given in Theorem \ref{teo.filtracion gen}. One can always recover $A$ and $G(A)$ from $H(A)$, via $A\cong H(A)/\langle z-1\rangle$ and $G(A) \cong H(A)/\langle z\rangle$, respectively.
\begin{proposition}\label{prop.rel homogeniz Grad}
If $A=\sigma(R)\langle x_1,\dots, x_n\rangle$ is a bijective $\sigma$-filtered skew PBW extension over a finitely presented algebra $R$, then the following assertions hold:
\begin{enumerate}
\item[\rm (1)] $A\cong H(A)/\langle z-1\rangle$.
\item[\rm (2)] $H(A)/\langle z\rangle\cong G(A)$.
\end{enumerate}
\end{proposition}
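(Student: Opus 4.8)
The plan is to work directly from the presentation of $H(A)$ obtained in the proof of Theorem \ref{teo. homogenization}, namely
\[
H(A) = \frac{\mathbb{K}\langle z, t_1, \dots, t_m, x_{1}, \dots, x_{n}\rangle}{\langle\widehat{r}_1, \dots, \widehat{r}_s,\ \widehat{f_{ik}}, \ \widehat{g_{ji}}, t_kz-zt_k, x_iz-zx_i\mid 1\leq i,j\leq n,\ 1\leq k\leq m\rangle},
\]
and to compute the two quotients by hand on generators and relations. For part (1), setting $z=1$ in the presentation of $H(A)$ kills the relations $t_kz-zt_k$ and $x_iz-zx_i$ (which become trivial), and turns each homogenized relation $\widehat{g}$ back into the original $g$, since $\widehat{g}|_{z=1}=\sum_k g_k = g$ by the very definition of homogenization. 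Thus the presentation of $H(A)/\langle z-1\rangle$ collapses to exactly the presentation $\mathbb{K}\langle t_1,\dots,t_m,x_1,\dots,x_n\rangle/\langle r_1,\dots,r_s, f_{ik},g_{ji}\rangle$ of $A$ given in (\ref{eq. pres completa A}). I would make this precise by exhibiting the obvious surjection $H(A)\to A$ sending $z\mapsto 1$, $t_k\mapsto t_k$, $x_i\mapsto x_i$, checking it is well defined (each defining relation of $H(A)$ maps to a relation of $A$ or to $0$), noting $z-1$ lies in its kernel, and then constructing the inverse map $A\to H(A)/\langle z-1\rangle$ on generators and verifying the two composites are identities.

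For part (2), the argument is parallel: setting $z=0$ in the presentation of $H(A)$ kills the commutator relations $t_kz-zt_k$, $x_iz-zx_i$, and sends each $\widehat{g}=\sum_{k=0}^p g_k z^{p-k}$ to its top-degree term $g_p = \mathrm{lh}(g)$. Concretely, $\widehat{f_{ik}}|_{z=0}$ and $\widehat{g_{ji}}|_{z=0}$ become the leading homogeneous parts of $f_{ik}$ and $g_{ji}$; by (\ref{eq.rep f gorro}) these are $x_it_k - (\sigma_i(t_k)z^{1-\deg\sigma_i(t_k)})x_i - \cdots$ evaluated at $z=0$, i.e. the purely degree-$2$ pieces. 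Hence $H(A)/\langle z\rangle \cong \mathbb{K}\langle t_1,\dots,t_m,x_1,\dots,x_n\rangle/\langle \mathrm{lh}(r_i),\mathrm{lh}(f_{ik}),\mathrm{lh}(g_{ji})\rangle$. The remaining task is to identify this algebra with $G(A)$. Here I would use Proposition \ref{prop.libre-filt}: $A$ is free-filtered over $R$ with filtered basis $\mathrm{Mon}(A)$, so $G(A)$ is free-graded over $G(R)$ on $\overline{\mathrm{Mon}(A)}$, and the commutation relations defining $G(A)$ are obtained from those of $A$ by passing to leading terms with respect to $\mathrm{tdeg}$ — which, since $A$ preserves $\mathrm{tdeg}$ and $\sigma_i,\delta_i$ are filtered, are precisely $\mathrm{lh}(f_{ik})$ and $\mathrm{lh}(g_{ji})$. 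Together with $G(R)\cong L/\langle\mathrm{lh}(r_i)\rangle$ (the associated graded of the standard filtration on a finitely presented algebra), this gives the isomorphism.

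I expect the main obstacle to be part (2): while the generators-and-relations manipulation is formally clean, one must justify carefully that the associated graded of $A$ is genuinely presented by the leading homogeneous forms $\mathrm{lh}(f_{ik}),\mathrm{lh}(g_{ji})$ and not by some larger ideal. In general, for a filtered algebra the relations of $G(A)$ can be strictly larger than the symbols of a given generating set of relations of $A$; the statement needs precisely the hypotheses that $A$ is free-filtered (so that there is no ``collapse'' or extra relations coming from the module structure) and that $A$ preserves $\mathrm{tdeg}$ (so that the symbols behave multiplicatively). The clean way to sidestep the subtlety is to route everything through $H(A)$: since $H(A)$ is, by Theorem \ref{teo. homogenization}, itself a graded skew PBW extension over $H(R)$ — hence a free $H(R)$-module on $\mathrm{Mon}(A)$ with $z$ central and regular — one can first show that ${\rm Rees}(A)\cong H(A)$ (the Rees algebra and the homogenization coincide for $\sigma$-filtered skew PBW extensions, because the free-filtered structure of Proposition \ref{prop.libre-filt} matches the free-graded structure of $H(A)$ term by term), and then invoke the standard fact that ${\rm Rees}(A)/\langle z\rangle\cong G(A)$ and ${\rm Rees}(A)/\langle z-1\rangle\cong A$ for any positively filtered algebra. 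This reduces both parts (1) and (2) to the single identification ${\rm Rees}(A)\cong H(A)$, which is where the real content sits.
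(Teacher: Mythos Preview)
For part (1) your argument is exactly what the paper intends by ``It is clear.'' For part (2) the paper takes a different and much shorter route than either of yours: it observes that, because $A$ is $\sigma$-filtered, the ${\rm tdeg}$-filtration $\{\mathcal{F}_p(A)\}$ coincides with the standard filtration $\mathcal{F}^*_p(A):=(\mathcal{F}_p(L_{tx})+J)/J$ inherited from the presentation on the free algebra $L_{tx}=\mathbb{K}\langle t_1,\dots,t_m,x_1,\dots,x_n\rangle$, and then concludes directly.

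Both of your proposed routes for (2) share a genuine gap, and it sits precisely where you suspected---only one level down, at $R$ rather than at $A$. In the direct approach you assert ``$G(R)\cong L/\langle{\rm lh}(r_i)\rangle$ (the associated graded of the standard filtration on a finitely presented algebra)''; but this isomorphism is not automatic---it is exactly the statement that $R$ is a PBW deformation, which is not among the hypotheses. The Rees-algebra detour hides the same issue: matching the free-filtered decomposition $\mathcal{F}_p(A)=\bigoplus_\alpha \mathcal{F}_{p-|\alpha|}(R)x^\alpha$ from Proposition~\ref{prop.libre-filt} against the free-graded decomposition $H(A)_p=\bigoplus_\alpha H(R)_{p-|\alpha|}x^\alpha$ from Theorem~\ref{teo. homogenization} yields ${\rm Rees}(A)\cong H(A)$ only if $\mathcal{F}_q(R)\cong H(R)_q$ for every $q$, i.e.\ only if ${\rm Rees}(R)\cong H(R)$, i.e.\ only if $z$ is regular in $H(R)$---again the PBW-deformation condition on $R$. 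The paper itself obtains ${\rm Rees}(A)\cong H(A)$ only later, in Theorem~\ref{teo.propiedades H(A)}(6), under the extra assumption that $H(R)$ is Auslander-regular and \emph{using} the present proposition, so you cannot route the argument through it here. (To be fair, the paper's own final inference in (2) is extremely terse and does not explicitly address this point either.)
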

\begin{proof}
(1) It is clear.

(2) From (\ref{eq1.repr of A}) and (\ref{eq. pres completa A}),
\begin{align*}
A = &\ \mathbb{K}\langle t_1, \dots, t_m, x_{1}, \dots,
x_{n}\rangle/J \\ 
= &\ \mathbb{K}\langle t_1, \dots, t_m, x_{1}, \dots,
x_{n}\rangle/\langle r_1,\dots, r_s,\ \ f_{ik}, \ g_{ji}\mid 1\leq i,j\leq n,\  1\leq k\leq m\rangle, \end{align*} 
i.e. $A$ is an algebra defined by generators and relations. Then there is a
standard connected filtration $\{\mathcal{F}^*_p(A)\}_{p\in
\mathbb{N}}$ on $A$ wherein
$\mathcal{F}^*_p(A)=(\mathcal{F}_p(L_{tx})+J)/J$, i.e.
$\mathcal{F}^*_p(A)$ is the span of all words in the variables $t_1,
\dots, t_m, x_{1}, \dots, x_{n}$ of degree at most $p$. Notice that
for $\sigma$-filtered skew PBW extensions of a finitely presented
algebra $R$, $\mathcal{F}^*_p(A)$ coincides with $\mathcal{F}_p(A)$
as in (\ref{eq.filtr A}), for all $p\geq 0$. Therefore,
$H(A)/\langle z\rangle\cong G(A)$.
\end{proof}
\begin{example}\label{ex-homogWeyl} The Weyl algebra $A_n(\mathbb{K})$ in Example \ref{ex.Weyl
extend} is  the free associative algebra with generators $t_1,\dots,
t_n$, $x_1,\dots, x_n$ modulo the relations $t_jt_i=t_it_j$,\quad
$x_jx_i = x_ix_j$,\quad $x_it_j =t_jx_i+\delta_{ij}$, where
$\delta_{ij}$ is the Kronecker delta, $1\leq i, j \leq n$. Adding
another generator $z$ that commutes with $t_1,\dots, t_n$,
$x_1,\dots, x_n$ and replacing $\delta_{ij}$ with $\delta_{ij}z^2$
in the above relations yields the homogenized Weyl algebra
$H(A_n(\mathbb{K}))$. Since $A_n(\mathbb{K})$ is a bijective skew PBW extension over a finitely presented algebra $R = \mathbb{K}[t_1,\dots, t_n]$, such that $\sigma_i$, $\delta_i$ are
filtered and $A_n(\mathbb{K})$ preserves {\rm tdeg}, by Theorem
\ref{teo. homogenization} we have that
$H(A_n(\mathbb{K}))$ is a graded skew PBW extension over $H(R)=\mathbb{K}[t_1,\dots, t_n, z]$. By Proposition \ref{prop.rel
homogeniz Grad}, $A_n(\mathbb{K})\cong H(A_n(\mathbb{K}))/\langle
z-1\rangle$ and  $H(A_n(\mathbb{K}))/\langle z\rangle\cong
G(A_n(\mathbb{K}))$, which is just a commutative polynomial ring in
2n variables, where $G(A_n(\mathbb{K}))$ is the associated graded
algebra of $A_n(\mathbb{K})$ with respect to the filtration
$\mathcal{F}$ as in (\ref{eq.filtr A}).
\end{example}
\begin{example}\label{ex-tresdimensional}
Following \cite{BellSmith1990} or \cite[Definition C4.3]{Rosenberg}, a $3$-{\em dimensional algebra} $\mathcal{A}$ is a $\mathbb{K}$-algebra generated by the indeterminates $x, y, z$ subject to the relations $yz-\alpha zy=\lambda$, $zx-\beta xz=\mu$, and $xy-\gamma yx=\nu$, where $\lambda,\mu,\nu \in \mathbb{K}x+\mathbb{K}y+\mathbb{K}z+\mathbb{K}$, and $\alpha, \beta, \gamma \in \mathbb{K}^{*}$. $\mathcal{A}$ is called a \textit{3-dimensional skew polynomial $\mathbb{K}$-algebra} if the set $\{x^iy^jz^k\mid i,j,k\geq 0\}$ forms a $\mathbb{K}$-basis of the algebra. As we can see in \cite[Theorem C.4.3.1]{Rosenberg}, there are exactly fifteen non-isomorphic 3-dimensional skew polynomial $\mathbb{K}$-algebras, and one of these is the enveloping algebra of $\mathfrak{sl}(2,\mathbb{K})$. From the definition, it is clear that these algebras are skew PBW extensions over $\mathbb{K}$, and hence, $\sigma$-filtered skew PBW extensions. 

\vspace{0.2cm}

Redman \cite{Redman1999} studied the geometry of the homogenizations of two classes of 3-dimensional skew polynomial algebras. Following the terminology used in \cite{BellSmith1990}, the algebras Type I and Type II (these objects are called like this because these are two classes of three dimensional skew polynomial rings that have finite dimensional simple modules of arbitrarily large dimensions) are defined as 
\[
{\rm Type\ I}\ \begin{cases}
    g_1 = yz - \alpha zy,\\
    g_2 = zx - \beta xz - ay - b,\\
    g_3 = xy - \alpha yx
\end{cases}\ \ {\rm and}\ \ \ \ {\rm Type\ II}\ \begin{cases}
    g_1 = yz - \alpha zy - x - b_1,\\
    g_2 = zx - \alpha xz - y - b_2,\\
    g_3 = xy - \alpha yx - z - b_3
\end{cases}
 \]
where $a, b, b_1, b_2, b_3, \alpha, \beta \in \mathbb{C}$ with such that $\alpha\beta \neq 0$.

\vspace{0.2cm}

The homogenization
$H(\mathcal{A})$ of both types of algebras with respect to a central variable $t$ is given by $\mathbb{C}\langle
x,y,z,t\rangle$ with defining relations
 \[
 \begin{cases}
 yz - \alpha zy = 0,\\
    zx - \beta xz = ayt + bt^{2},\\
    xy - \alpha yx = 0,
 \end{cases}\ \ {\rm or}\ \ \ \ \begin{cases}
 yz - \alpha zy = xt + b_1t^{2},\\
    zx - \alpha xz = yt + b_2t^{2},\\
    xy - \alpha yx = zt + b_3t^2,
 \end{cases}
 \]
 and 
 $xt - tx = yt - ty = zt - tz = 0$. From \cite[Proposition 2.1.1]{BellSmith1990}, the standard monomials $\{x^{i}y^{j}z^{k}t^{l}\mid i, j, k, l\ge 0\}$ form a $\mathbb{C}$-basis for the algebra $D$ with the degree and dictionary ordering being $x > y > z > t$, whence $t$ is a non-zero
divisor.

\vspace{0.2cm}

By Proposition \ref{prop.rel homogeniz Grad}, $\mathcal{A}\cong
H(\mathcal{A})/\langle t-1\rangle$ and $H(\mathcal{A})/\langle t\rangle\cong
G(\mathcal{A})$, where $G(\mathcal{A})$ is the associated graded algebra of $\mathcal{A}$
with respect to the filtration $\mathcal{F}$ as in (\ref{eq.filtr
A}). Thus, $H(\mathcal{A})$ is a central extension of the algebra $H(\mathcal{A})/\langle t\rangle$, and therefore $H(\mathcal{A})$ is a central extension of $G(\mathcal{A})$. These facts were used in \cite{Redman1999} to prove that the quotient algebra $H(\mathcal{A})/\langle t\rangle$ is a 3-dimensional Artin-Schelter regular algebra (\cite[Lemma 1.1]{Redman1999}), $H(\mathcal{A})$ is 4-dimensional Artin-Schelter regular, graded Noetherian domain, and Cohen Macaulay with Hilbert series $(1-t)^{-4}$ (\cite[Proposition 1.2]{Redman1999}). He also described the noncommutative projective geometry of these objects, and compute the finite dimensional simple modules for the homogenization of Type I algebras in the case that $\alpha$ is not a primitive root of unity. In this case, all finite dimensional simple modules are quotients of line modules that are homogenizations of Verma modules. From Theorem \ref{teo. homogenization}, we know that $H(\mathcal{A})$ is a graded skew PBW extension over $\mathbb{C}[t]$. In Theorem \ref{teo.propiedades H(A)} below, we generalize some of these properties for $\sigma$-filtered skew PBW extensions over a ring $R$ such that $H(R)$ is Auslander-regular.
\end{example}
\begin{example}\label{ex-Liealgebratresdimensional}
Following Le Bruyn and Smith \cite{LeBruynSmith1993}, we write
$\mathfrak{g}$ = $\mathbb{C}e\oplus \mathbb{C}f\oplus\mathbb{C}h$ and
define a vector space isomorphism  $\mathfrak{sl}(2, \mathbb{C})\to
\mathfrak{g}$ by
\[
\begin{pmatrix}
0 & 1 \\
0 & 0
\end{pmatrix} \to e,\quad \begin{pmatrix}
0 & 0 \\
1 & 0 
\end{pmatrix} \to f,\quad \begin{pmatrix}
1 & 0 \\
0 & -1 
\end{pmatrix}\to h,
\]
and we transfer the Lie bracket on $\mathfrak{sl}(2, \mathbb{C})$ to $\mathfrak{g}$ giving $[e, f]=h$, $[h,e]=2e$, $[h, f]=-2f$. The homogenization $H(U(\mathfrak{g}))$ of the universal enveloping algebra of $\mathfrak{g}$ with respect to a central variable $t$ is $\mathbb{C}\langle e, f, h, t\rangle$ with defining equations 
\[
ef - fe = ht,\quad he - eh =
2et,\quad hf - fh = -2ft\quad et - te = ft - tf = ht - th = 0.
\]
By Proposition \ref{prop.rel homogeniz Grad}, $U(\mathfrak{g})\cong H(U(\mathfrak{g}))/\langle t-1\rangle$ and $H(U(\mathfrak{g}))/\langle t\rangle\cong G(U(\mathfrak{g}))$, where $G(U(\mathfrak{g}))$ is the associated graded algebra of $U(\mathfrak{g})$ with respect to the filtration $\mathcal{F}$ as in (\ref{eq.filtr A}). Thus, $H(U(\mathfrak{g}))$ is a central extension of $H(U(\mathfrak{g}))/\langle t\rangle\cong G(U(\mathfrak{g}))\cong \mathbb{C}[e, f, h] = S(\mathfrak{g})$, the symmetric algebra on $\mathfrak{g}$. These facts were used in \cite{LeBruynSmith1993} to deduce that $H(U(\mathfrak{g}))$ has Hilbert series $(1-t)^{-4}$, it is a positively graded Noetherian domain, Auslander-regular of dimension 4, satisfies the Cohen-Macaulay property, and its center is $\mathbb{C}[\Omega, t]$, where $\Omega = h^2 + 2ef + 2fe$ is the Casimir element. Recall that $U(\mathfrak{g})$ is a $\sigma$-filtered skew PBW extension (Example \ref{ex.Weyl extend}), and by Theorem \ref{teo. homogenization}, $H(U(\mathfrak{g}))$ is a graded skew PBW extension over $\mathbb{C}[t]$. Below, using Theorem \ref{teo.propiedades H(A)}, we obtain some of the above properties for $\sigma$-filtered skew PBW extensions over $R$ such that $H(R)$ is Auslander-regular. 
\end{example}
\begin{remark}
A graded algebra $R$ is said to be \emph{generated
in degree one if $R_1$ generates $R$ as an algebra.} Let $A$ be a $\sigma$-filtered skew PBW extension over a commutative
polynomial ring $R=\mathbb{K}[t_1,\dots, t_m]$. Notice that if we use
the filtration given in (\ref{eq1.3.1a}), then $H(A)$ and $G(A)$ are
not generated in degree one, while if we use the standard filtration
for $R$, then $H(A)$ and $G(A)$ are generated in degree one. For the
study of certain properties in an algebra such as Artin-Schelter regular,
strongly Noetherian, Auslander regular, Cohen-Macaulay, Koszul and
the Jacobson radical, some authors impose the condition that the
algebra be generated in degree one (e.g. \cite[Theorem 3.3]{Greenfeld2019} or \cite[Theorem 0.1]{Zhang2}). Now, for example presented by Greenfeld et al. \cite[Section 6]{Greenfeld2019}, it was considered that in Ore extensions of endomorphism type $R[x,\sigma]$, $\deg (r) = 0$, for all non-zero $r\in R$ and $\deg (rx^j) = j$, for all natural number $j > 0$. This fact was used to study the properties graded
nilpotent (a graded algebra is \emph{graded nilpotent} if the
algebra generated by any set of homogeneous elements of the same
degree is nilpotent), graded locally nilpotent (a graded algebra
is \emph{graded locally nilpotent} if the algebra generated by any
finite set of homogeneous elements of the same degree is nilpotent),
the Jacobson radical, and to ask some related questions in the Ore
extension $R[x; \sigma]$.
\end{remark}
\section{Other homological properties}\label{sect.essen Cala}
It is known that a graded algebra $R$ is right (left) Noetherian if and only if it is graded right (left) Noetherian, which means that every graded right (left) ideal is finitely generated (\cite[Proposition 1.4]{Levasseur}). Let $M$ be an $R$-module. The \emph{grade number} of $M$ is $j_R(M) := \min\{p\mid {\rm Ext}_R^p(M, R)\neq 0\}$ or $\infty$ if no such $p$ exists. Notice that $j_R(0) = \infty$. When $R$ is Noetherian, $j_R(M)\leq {\rm pd}_R(M)$ (where ${\rm pd}_R(M)$ denotes the projective dimension of $M$), and if furthermore injdim$(R)=q< \infty$, we have $j_R(M)\leq q$, for all non-zero finitely generated $R$-module $M$ (see \cite{Levasseur} for further details).
\begin{definition}(\cite[Definition 2.1]{Levasseur}).\label{def. Ausl Gor Reg}
Let $R$ be a Noetherian ring.
\begin{enumerate}
\item[\rm (1)] An $R$-module $M$ satisfies the \emph{Auslander-condition} if for all $p \geq 0$, $j_R(N)\geq p$, for every $R$-submodule $N$ of ${\rm Ext}_R^p(M, R)$.
\item[\rm (2)] $R$ is called \emph{Auslander-Gorenstein} of dimension $q$ if injdim$(R) = q <\infty$, and every left or right finitely generated $R$-module satisfies the Auslander-condition.
\item[\rm (3)] $R$ is said to be \emph{Auslander-regular} of dimension $q$ if gld$(R) = q <\infty$, and every  left or right  finitely generated $B$-module satisfies the Auslander-condition.
\end{enumerate}
\end{definition}
\begin{theorem}\label{teo.propiedades H(A)}
If $A$ is a $\sigma$-filtered skew PBW extension over a ring $R$ such that
$H(R)$ is Auslander-regular, then the following assertions hold:
\begin{enumerate}
\item[\rm (1)] $H(A)$ is graded Noetherian.
\item[\rm (2)] $H(A)$ is a domain.
\item[\rm (3)] $A$ is Noetherian.
\item[\rm (4)] $A$ is a PBW deformation of $G(A)$.
\item[\rm (5)] $G(A)$ is Noetherian.
\item[\rm (6)] ${\rm Rees}(A)\cong H(A)$.
\item[\rm (7)] $A$ is Zariski.
\item[\rm (8)] $H(A)$ is  Artin-Schelter regular.
\item[\rm (9)] $G(A)$ is Artin-Schelter regular.
\end{enumerate}
\end{theorem}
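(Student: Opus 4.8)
The plan is to derive all nine statements from the single structural fact, provided by Theorem~\ref{teo. homogenization}, that $H(A)=\sigma(H(R))\langle x_1,\dots,x_n\rangle$ is a (bijective) graded skew PBW extension over the connected graded algebra $H(R)$, combined with the canonical isomorphisms $A\cong H(A)/\langle z-1\rangle$ and $G(A)\cong H(A)/\langle z\rangle$ of Proposition~\ref{prop.rel homogeniz Grad}. First, since $H(R)$ is Auslander-regular it is in particular Noetherian, and a bijective skew PBW extension over a Noetherian ring is Noetherian (see \cite{LezamaReyes}); being graded, $H(A)$ is then graded Noetherian, which is (1). Moreover $H(R)$ is a connected graded Noetherian algebra of finite global dimension, hence a domain, and a bijective skew PBW extension over a domain is again a domain (see \cite{Fajardoetal2020}); this yields (2). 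In particular the central element $z\in H(A)_1$ is a non-zero-divisor.

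Statements (3)--(7) are then formal consequences. For (3) and (5), $A\cong H(A)/\langle z-1\rangle$ and $G(A)\cong H(A)/\langle z\rangle$ are homomorphic images of the Noetherian ring $H(A)$. For (4), observe that $H(A)/\langle z\rangle$ is exactly the leading-term algebra $\mathbb{K}\langle t_1,\dots,t_m,x_1,\dots,x_n\rangle/\langle{\rm lh}(r_i),{\rm lh}(f_{ik}),{\rm lh}(g_{ji})\rangle$ of the presentation~(\ref{eq. pres completa A}); by Proposition~\ref{prop.rel homogeniz Grad}(2) the canonical graded surjection of this algebra onto $G(A)$ is an isomorphism, which is precisely the assertion that $A$ is a PBW deformation of $G(A)$ (equivalently, $z$ is a central regular element of degree $1$ with $H(A)/\langle z\rangle\cong G(A)$, so $H(A)$ is a central regular extension of $G(A)$ and one invokes \cite[Theorem 1.3]{Cassidy2007}). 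For (6), the $\sigma$-filtration on $A$ coincides with the standard connected filtration attached to the presentation~(\ref{eq. pres completa A}) (see the proof of Proposition~\ref{prop.rel homogeniz Grad}), so the canonical graded surjection $H(A)\twoheadrightarrow{\rm Rees}(A)$ that sends each degree-one generator $t_k$, $x_i$ to $t_kz$, $x_iz$ and $z$ to $z$ has kernel the $z$-torsion submodule of $H(A)$ (cf.\ \cite{Huishivan1996}); this is zero by (2), whence ${\rm Rees}(A)\cong H(A)$. For (7), the filtration $\{\mathcal{F}_p(A)\}$ is positive, so trivially $\mathcal{F}_{-1}(A)=0\subseteq{\rm Rad}(\mathcal{F}_0(A))$, while ${\rm Rees}(A)\cong H(A)$ is two-sided Noetherian by (1); hence $A$ is a Zariski ring.

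It remains to prove (8), after which (9) follows. Here $H(A)$ is a graded skew PBW extension over the connected graded Auslander-regular algebra $H(R)$, and the key input is that such an extension is Artin--Schelter regular; the argument (see \cite{Suarez}, \cite{SuarezCaceresReyes2021}) filters $H(A)$ by the total degree in $x_1,\dots,x_n$, identifies the associated graded algebra as a quasi-commutative graded skew PBW extension over $H(R)$, i.e.\ an iterated graded Ore extension $H(R)[y_1;\overline{\sigma}_1]\cdots[y_n;\overline{\sigma}_n]$ of automorphism type, uses that each graded Ore extension preserves Auslander-regularity and Artin--Schelter regularity (raising the global and Gelfand--Kirillov dimensions by one), and then lifts these properties back to $H(A)$ by a filtered--graded transfer argument. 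Finally, for (9): $z\in H(A)_1$ is a central non-zero-divisor with $H(A)/\langle z\rangle\cong G(A)$, so the reduction-of-dimension principle for Artin--Schelter regular algebras under central regular quotients shows that $G(A)$ is Artin--Schelter regular (alternatively, $G(R)\cong H(R)/\langle z\rangle$ is Artin--Schelter regular by the same principle and $G(A)$ is a graded skew PBW extension over it).

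The main obstacle is step (8): one needs the non-trivial preservation of Artin--Schelter regularity under forming a graded skew PBW extension over an Auslander-regular connected graded base. If this is available as a cited theorem the proof is short; otherwise the genuine work is the filtered--graded transfer --- computing ${\rm Ext}^i_{H(A)}(\mathbb{K},H(A))$ from the corresponding groups over the quasi-commutative associated graded ring and verifying that Gorenstein symmetry, together with finiteness of global and Gelfand--Kirillov dimension, survives the lift. The remaining steps rely only on the standard chain ``$H(R)$ Auslander-regular $\Rightarrow$ $H(R)$ a domain $\Rightarrow$ $z$ regular in $H(A)$ $\Rightarrow$ ${\rm Rees}(A)\cong H(A)$'' and on known Noetherianity and domain results for bijective skew PBW extensions.
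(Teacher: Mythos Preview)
Your proposal is correct and follows essentially the same architecture as the paper's proof: both start from Theorem~\ref{teo. homogenization} to realize $H(A)$ as a graded bijective skew PBW extension over $H(R)$, deduce (1)--(2) from known transfer results for such extensions, read off (3)--(7) from the quotient identifications of Proposition~\ref{prop.rel homogeniz Grad} together with regularity of $z$, and obtain (8)--(9) by invoking the Artin--Schelter regularity of graded skew PBW extensions over an Auslander-regular base and then passing to the central regular quotient.

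The differences are minor but worth recording. For (2) the paper first shows $H(A)$ itself is Auslander-regular (via \cite[Theorem 2.9]{LezamaVanegas}) and then applies Levasseur's theorem that connected graded Auslander-regular algebras are domains; you instead argue that $H(R)$ is a domain and then lift this through the skew PBW extension. Your route is fine, but your justification ``connected graded Noetherian of finite global dimension, hence a domain'' is not literally correct---finite global dimension alone does not force a connected graded Noetherian algebra to be a domain; you should invoke the full Auslander-regular hypothesis on $H(R)$ (which is what you have) and cite Levasseur. For (6) the paper simply quotes \cite[Proposition 2.6]{Wu2013} once (4) is known, whereas you give the explicit $z$-torsion argument; both are standard. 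Finally, for (8) you correctly identify the key input as \cite[Proposition 3.5(iii)]{SuarezLezamaReyes2107-1}, and your sketch of its proof via the quasi-commutative associated graded is accurate; in the paper this is a one-line citation.
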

\begin{proof}
From Theorem \ref{teo. homogenization}, we know that $H(A)$ is a graded bijective skew PBW extension over $H(R)$.
\begin{enumerate}
\item [\rm (1)] Since $H(R)$ is Noetherian and graded, by \cite[Proposition 1.4]{Levasseur} $H(R)$ is graded Noetherian. From \cite[Proposition 2.7]{SuarezLezamaReyes2107-1}, we obtain that $H(A)$ is graded Noetherian.
\item [\rm (2)] By \cite[Theorem 2.9]{LezamaVanegas} we know that $H(A)$ is Auslander-regular. Since $H(R)$ is connected graded, \cite[Remark 2.10]{Suarez} implies that $H(A)$ is connected graded, and thus the assertion follows from \cite[Theorem 4.8]{Levasseur}.
\item [\rm (3)]  Part (1) above shows that $H(R)$ is Noetherian, whence $A\cong H(A)/\langle z-1\rangle$ is Noetherian.
\item [\rm (4)] Notice that $A$ is a deformation of $G(A)$. By (2), $z$ is regular in $H(A)$, and since $G(A)\cong H(A)/\langle z\rangle$, then $H(A)$ is a central regular extension of $G(A)$. By \cite[Theorem 1.3]{Cassidy2007}, $A$ is a PBW deformation of $G(A)$.
\item [\rm (5)] Since $H(A)$ is connected graded and $z\in H(A)_1$ is central
regular, then Proposition \cite[Proposition 3.5]{Levasseur} implies that $H(A)$ is Noetherian if and only if $H(A)\langle z\rangle\cong G(A)$ is Noetherian.
\item [\rm (6)] It follows from (4) and \cite[Proposition 2.6]{Wu2013}.
\item [\rm (7)] From (1) and (6), ${\rm Rees}(A)$ is Noetherian. As $A$ is connected filtered, then $A$ is Zariski.
\item [\rm (8)] Since $H(R)$ is finitely presented connected Auslander-regular and $H(A)$ is a graded skew PBW extension over $H(R)$, then by \cite[Proposition 3.5 (iii)]{SuarezLezamaReyes2107-1} we have that $H(A)$ is Artin-Schelter regular.
\item [\rm (9)] As $H(A)$ is a connected graded, by (2) $H(A)$ is a domain, and by (8)  $H(A)$ is Artin-Schelter regular. From \cite[Corollary 1.2]{Rogalski2012}, $G(A)\cong H(A)/\langle z\rangle$ is Artin-Schelter regular.
\end{enumerate}
\end{proof}
Example \ref{ex.Weyl extend} showed that the Weyl algebra $A_n(\mathbb{K})$ is a $\sigma$-filtered skew PBW extension over $R=\mathbb{K}[t_1,\dots,t_n]$, and by Example \ref{ex-homogWeyl}, $H(R)=\mathbb{K}[t_1,\dots,t_n, z]$, which is Auslander-regular. Hence, Theorem \ref{teo.propiedades H(A)} guarantees that $A_n(\mathbb{K})$ is Noetherian, Zariski, and a PBW deformation of $G(A_n(\mathbb{K}))$, $H(A_n(\mathbb{K}))$ is a domain graded Noetherian and Artin-Schelter regular, $G(A_n(\mathbb{K}))$ is Noetherian and Artin-Schelter regular, and ${\rm Rees}(A_n(\mathbb{K}))\cong H(A_n(\mathbb{K}))$. 

\vspace{0.2cm}

From Proposition \ref{prop.rel homogeniz Grad} and Theorem
\ref{teo.propiedades H(A)}, we immediately get the following result.
\begin{corollary}\label{cor.grad asociat}
If $A$ is a $\sigma$-filtered skew PBW extension over a ring $R$
defined by $R=\mathbb{K}\langle t_1, \dots, t_m\rangle/\langle
r_1,\dots, r_s\rangle$ such that $\sigma_i$ is graded, then $G(A)$ is a graded skew PBW extension over $G(R)$ in $n$ variables
$y_1,\dots, y_n$ given by
\begin{align} \begin{split}
y_it_k &= \sigma_i(t_k)y_i + r_i,\\
y_jy_i & =c_{i,j}y_iy_j+r_{0_{j,i}} + r_{1_{j,i}}y_{1} + \cdots +
r_{n_{j,i}}y_{n},
\end{split}
\end{align}
where
\begin{equation}
\begin{split}
r_i  & = \left\{\begin{aligned}
  \begin{array}{ll}
    \delta_i(t_k), & \text{ if } \deg(\delta_i(t_k)) = 2,\\
    0, & \text{otherwise},
  \end{array}
\end{aligned}\right.\\
r_{0_{j,i}} & =\left\{\begin{aligned}
                        \begin{array}{ll}
                          r_{0_{j,i}}, &\  \text{ if } \deg(r_{0_{j,i}}) = 2, \\
                          0, &\  \text{otherwise},
                        \end{array}
                      \end{aligned}\right.\\
r_{l_{j,i}} & =\left\{\begin{aligned}
                            \begin{array}{ll}
                             r_{l_{j,i}} , & \text{ if } \deg(r_{l_{j,i}}) = 1, \\
                              0, & \text{otherwise},
                            \end{array}
                          \end{aligned}\right.
\end{split}
\end{equation}
for $1\leq l\leq n$, with $c_{i,j},r_{0_{j,i}},  r_{1_{j,i}}, \dots,
r_{n_{j,i}}$, $1\leq i,j\leq n$, the constants that define $A$ as in (\ref{eq.rep variables x}), and $t_k\in G(R)$ is the coset of $t_k$.
\end{corollary}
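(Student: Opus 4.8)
The plan is to realize $G(A)$ as the quotient of the homogenization $H(A)$ by the central indeterminate $z$, and then to read off the commutation rules of $G(A)$ by reducing those of $H(A)$ modulo $z$. First I would equip $R = \mathbb{K}\langle t_1,\dots,t_m\rangle/\langle r_1,\dots,r_s\rangle$ with the connected filtration induced by the standard filtration on the free algebra, so that all hypotheses of Theorem \ref{teo. homogenization} hold --- $R$ finitely presented, $\sigma_i$ and $\delta_i$ filtered, $A$ bijective and preserving ${\rm tdeg}$. Theorem \ref{teo. homogenization} then yields $H(A) = \sigma(H(R))\langle x_1,\dots,x_n\rangle$, a graded bijective skew PBW extension over the connected graded algebra $H(R)$, carrying the commutation relations (\ref{eq.relac iii en H(A)}) and (\ref{eq.relac iv en H(A)}). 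By Proposition \ref{prop.rel homogeniz Grad}(2) one has $G(A) \cong H(A)/\langle z\rangle$, with $G(A)$ taken relative to the filtration (\ref{eq.filtr A}); and the analogous identification $H(R)/\langle z\rangle \cong G(R)$ for the base ring holds by the standard properties of the homogenization of a presented filtered algebra, with each generator $t_k$ of $H(R)$ mapping to its coset $t_k \in G(R)$.

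Next I would pass to the quotient by the two-sided ideal $\langle z\rangle = zH(A)$. Since $z$ is central and $H(A) = \bigoplus_{\alpha \in \mathbb{N}^n} H(R)x^\alpha$ is $H(R)$-free on ${\rm Mon}(A)$, we have $zH(A) = \bigoplus_\alpha zH(R)x^\alpha$, so $G(A) \cong H(A)/\langle z\rangle$ is free over $G(R) \cong H(R)/\langle z\rangle$ on the cosets $y^\alpha := \overline{x^\alpha}$, and it is graded because $z$ is homogeneous; thus $G(A)$ is a skew PBW extension over $G(R)$ in variables $y_i := \overline{x_i}$ ($1 \le i \le n$). To obtain the displayed relations I would reduce (\ref{eq.relac iii en H(A)}) and (\ref{eq.relac iv en H(A)}) modulo $z$. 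As $\sigma_i$ is graded, $\deg(\sigma_i(t_k)) = \deg(t_k) = 1$, so $z^{1-\deg(\sigma_i(t_k))} = 1$ and (\ref{eq.relac iii en H(A)}) becomes $y_i t_k = \sigma_i(t_k)y_i + \bigl(\delta_i(t_k)z^{2-\deg(\delta_i(t_k))}\bmod z\bigr)$; since $\delta_i$ filtered forces $\deg(\delta_i(t_k)) \le 2$, the $\delta$-term survives precisely when $\deg(\delta_i(t_k)) = 2$ and vanishes otherwise, which is the asserted formula for $r_i$. Similarly, reducing (\ref{eq.relac iv en H(A)}) leaves $c_{i,j} \in \mathbb{K} = G(R)_0$ unchanged, keeps $r_{0_{j,i}}z^{2-\deg(r_{0_{j,i}})}$ iff $\deg(r_{0_{j,i}}) = 2$ and keeps $r_{l_{j,i}}z^{1-\deg(r_{l_{j,i}})}y_l$ iff $\deg(r_{l_{j,i}}) = 1$ --- the bounds $\deg(r_{0_{j,i}}) \le 2$ and $\deg(r_{l_{j,i}}) \le 1$ following from $A$ preserving ${\rm tdeg}$ --- giving the formulas for $r_{0_{j,i}}$ and $r_{l_{j,i}}$ and the inclusion $y_jy_i - c_{i,j}y_iy_j \in G(R)_2 + G(R)_1y_1 + \cdots + G(R)_1y_n$. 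Finally, $\sigma_i$ induces a degree-preserving graded endomorphism of $G(R)$ and, because $\widehat{\sigma_i}(z) = z$, the graded automorphism $\widehat{\sigma_i}$ of $H(R)$ descends to a graded automorphism of $G(R) \cong H(R)/\langle z\rangle$; likewise $\delta_i$ induces a degree $+1$ graded $\sigma_i$-derivation $G(R)(-1) \to G(R)$. Hence the two conditions of Proposition \ref{prop.grad A} hold and $G(A)$ is the graded skew PBW extension over $G(R)$ described in the statement.

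The step requiring the most care is the bookkeeping of the exponents $z^{1-\deg(-)}$ and $z^{2-\deg(-)}$ as $z \to 0$, together with pinning down the meaning of ``$\sigma_i$ is graded'': the working hypothesis is that $\sigma_i$ maps the degree-one component of $R$ into itself, so that $\deg(\sigma_i(t_k)) = 1$ and the homogenized automorphism $\widehat{\sigma_i}$ reduces modulo $z$ to the expected graded automorphism of $G(R)$. With that understood, the remainder is a direct transcription of the presentation of $H(A)$ obtained in the proof of Theorem \ref{teo. homogenization}, so no further difficulty arises.
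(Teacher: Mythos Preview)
Your approach is essentially the one the paper intends: the paper states the corollary with no proof beyond the line ``From Proposition~\ref{prop.rel homogeniz Grad} and Theorem~\ref{teo.propiedades H(A)}, we immediately get the following result,'' and your argument unpacks exactly that---build $H(A)$ as a graded skew PBW extension over $H(R)$, identify $G(A)\cong H(A)/\langle z\rangle$, and read the relations modulo $z$.

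One small remark on the references: you invoke Theorem~\ref{teo. homogenization} directly rather than Theorem~\ref{teo.propiedades H(A)}, and this is the right choice. Theorem~\ref{teo.propiedades H(A)} carries the extra hypothesis that $H(R)$ be Auslander-regular, which the corollary does not assume; the only input actually needed from it is its opening line, which is precisely Theorem~\ref{teo. homogenization}. So your route is slightly cleaner than the paper's citation suggests. Your bookkeeping on the $z$-exponents and the degree bounds ($\deg(\delta_i(t_k))\le 2$, $\deg(r_{0_{j,i}})\le 2$, $\deg(r_{l_{j,i}})\le 1$) is correct, as is the observation that $zH(A)=\bigoplus_\alpha zH(R)x^\alpha$ since $z$ is central and commutes with every $x_i$. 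Note only that Theorem~\ref{teo. homogenization} requires $A$ to be bijective, a hypothesis the corollary leaves implicit; you state it explicitly, which is appropriate.
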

\begin{proposition}\label{prop.ex dim 2}
PBW deformations of  Artin-Schelter regular algebras of dimension
two are $\sigma$-filtered skew PBW extensions.
\end{proposition}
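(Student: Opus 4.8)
The plan is to reduce the statement to the Ore-extension description already treated in Example \ref{ex.Weyl extend}(4). First I would recall the classification of connected graded Artin--Schelter regular algebras of global dimension two (for which, as customary in this setting, one works over an algebraically closed base field): such an algebra $R$, generated in degree one, is, after a linear change of its two generators, either the quantum plane $\mathbb{K}\langle x,y\rangle/\langle yx-qxy\rangle$ with $q\in\mathbb{K}\setminus\{0\}$ or the Jordan plane $\mathbb{K}\langle x,y\rangle/\langle yx-xy-x^2\rangle$. In both cases the single (quadratic) defining relation takes the uniform form $r=yx-c_1xy-c_3x^2$ with $c_1\neq 0$ (namely $c_1=q$, $c_3=0$ in the quantum case, and $c_1=c_3=1$ in the Jordan case), and $\{x^iy^j\mid i,j\geq 0\}$ is a $\mathbb{K}$-basis of $R$.

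Next I would fix this presentation and write out an arbitrary PBW deformation $U$ of $R$. By definition $U=\mathbb{K}\langle x,y\rangle/\langle r+l\rangle$ with $\deg(l)\leq 1$, say $l=-c_2y-c_4x-c_5$, so that $U$ is generated by $x,y$ subject to the single relation
\begin{equation*}
yx=(c_1x+c_2)y+(c_3x^2+c_4x+c_5).
\end{equation*}
This is precisely the commutation rule of an Ore extension over $\mathbb{K}[x]$: let $\sigma\colon\mathbb{K}[x]\to\mathbb{K}[x]$ be the algebra endomorphism with $\sigma(x)=c_1x+c_2$, which is an automorphism because $c_1\neq 0$, and let $\delta$ be the $\sigma$-derivation of $\mathbb{K}[x]$ determined by $\delta(x)=c_3x^2+c_4x+c_5$. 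The identity map on generators induces a surjection $U\twoheadrightarrow\mathbb{K}[x][y;\sigma,\delta]$, and since $U$ is spanned over $\mathbb{K}$ by the monomials $x^iy^j$ (rewrite every occurrence of $yx$ by the relation above, moving $x$ to the left of $y$) while $G(U)\cong R$ and $\{x^iy^j\}$ is a $\mathbb{K}$-basis of $R$, those monomials are $\mathbb{K}$-linearly independent in $U$, so the surjection is an isomorphism. Hence $\mathbb{K}[x]\subseteq U$, ${\rm Mon}(U)=\{y^j\mid j\geq 0\}$ is a free $\mathbb{K}[x]$-basis, and $U\cong\sigma(\mathbb{K}[x])\langle y\rangle$ is a skew PBW extension over $\mathbb{K}[x]$ with $\sigma_1=\sigma$ and $\delta_1=\delta$ in the sense of Proposition \ref{sigmadefinition}.

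Finally I would verify the two hypotheses of Theorem \ref{teo.filtracion gen} for $\mathbb{K}[x]$ endowed with its standard filtration. Because there is a single variable $y$, condition (4) of Definition \ref{def.skewpbwextensions} reduces to $yy-yy=0$, so $U$ preserves ${\rm tdeg}$ automatically (Definition \ref{def.total degree}(5)). Moreover $\deg(\sigma(x))\leq 1$ gives $\sigma(\mathcal{F}_p(\mathbb{K}[x]))\subseteq\mathcal{F}_p(\mathbb{K}[x])$, while $\deg(\delta(x))\leq 2$ together with the $\sigma$-Leibniz rule gives $\deg(\delta(x^p))\leq p+1$, hence $\delta(\mathcal{F}_p(\mathbb{K}[x]))\subseteq\mathcal{F}_{p+1}(\mathbb{K}[x])$; thus $\sigma$ and $\delta$ are filtered. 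So $U$ meets conditions (1) and (2) of Theorem \ref{teo.filtracion gen} --- this is exactly the situation of Example \ref{ex.Weyl extend}(4) --- and therefore $U$ is a $\sigma$-filtered skew PBW extension over $\mathbb{K}[x]$, as claimed. I expect the only substantive step to be the classification at the start: it is what forces, after a change of variables, that the relation carries no $y^2$ term, so that $\mathbb{K}[x]$ genuinely serves as the ring of coefficients; once that is in hand, the identification of $U$ with the Ore extension and the verification that the resulting $\sigma,\delta$ are filtered are routine.
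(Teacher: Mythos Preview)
Your argument is correct, but it follows a different path than the paper. The paper quotes Gaddis's explicit list \cite[Corollary~2.13]{Gaddis2016} of PBW deformations of AS regular algebras of dimension two up to isomorphism --- five concrete algebras --- and then observes that the first three are skew PBW extensions over $\mathbb{K}$ while the last two are skew PBW extensions over $\mathbb{K}[y]$, checking the conditions of Theorem~\ref{teo.filtracion gen} case by case. You instead invoke only the classification of the \emph{underlying} AS regular algebras (quantum plane or Jordan plane after a linear change of generators), write the generic deformation in a uniform way, and identify it with the one-variable Ore extension of Example~\ref{ex.Weyl extend}(4); this handles all cases simultaneously as $\sigma(\mathbb{K}[x])\langle y\rangle$. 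Your route is more self-contained (it does not rely on Gaddis's finer classification of the deformations themselves) and more uniform (a single coefficient ring $\mathbb{K}[x]$ throughout), at the cost of the preliminary change-of-variables step and the accompanying assumption that $\mathbb{K}$ is algebraically closed. The paper's route is shorter on the page because the heavy lifting is outsourced to \cite{Gaddis2016}, and it avoids any hypothesis on $\mathbb{K}$ beyond what that reference already requires.
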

\begin{proof}
By \cite[Corollary 2.13]{Gaddis2016}, PBW deformations of Artin-Schelter regular algebras of dimension two are isomorphic to one of the following algebras: $\mathbb{K}\langle x,y\rangle/\langle xy-qyx\rangle$, $\mathbb{K}\langle x,y\rangle/\langle xy-qyx+1\rangle$, $\mathbb{K}\langle x,y\rangle/\langle yx-xy+y\rangle$, $\mathbb{K}\langle x,y\rangle/\langle yx-xy+y^2\rangle$, $\mathbb{K}\langle x,y\rangle/\langle yx-xy+y^2+1\rangle$, where $q\in\mathbb{K}\setminus \{0\}$. Notice that the first three algebras are skew PBW extensions over $\mathbb{K}$ and the last two are skew PBW extensions over $\mathbb{K}[y]$. As one can check, every algebra satisfies the conditions (1) and (2) established in Theorem \ref{teo.filtracion gen}.
\end{proof}
\begin{example}\label{rem-nueva}
\begin{enumerate}
    \item [\rm (1)] Andruskiewitsch, Dumas and Pe\~{n}a \cite{AndruskiewitschDumas2021} studied the
Hopf algebra $\mathcal{D}$ which was called  \emph{the double of the Jordan plane}. The authors considered the field $\mathbb{K}$ to be characteristic zero and algebraically closed. Following \cite[Definition 2.1]{AndruskiewitschDumas2021}, the Hopf algebra $\mathcal{D}$ is presented by generators $u, v, \zeta, g^{\pm 1}, x, y$ and relations $g^{\pm 1}g^{\pm 1} = 1$,\quad $\zeta g = g\zeta$,\quad $gx = xg$, \quad $gy = yg + xg$, \quad $\zeta y = y\zeta + y$, \quad $\zeta x = x\zeta + x$,  \quad $ug = gu$, \quad $vg = gv + gu$, \quad $v\zeta = \zeta v + v$, \quad $u\zeta = \zeta u + u$, \quad $yx = xy - \frac{1}{2}x^2$, \quad $vu = uv -\frac{1}{2}u^2$, \quad $ux = xu$, \quad $vx = xv + (1 - g) + xu$, $uy = yu + (1 - g)$, \quad $vy = yv - g\zeta + yu$. According to \cite[Lemma 4.1]{AndruskiewitschDumas2021}, the algebra $\mathcal{D}$ can be described as the iterated Ore extension
\[
\mathcal{D}\cong \mathbb{K}[g^{\pm 1}, x, u][y; \sigma_1, \delta_1][\zeta; \sigma_2, \delta_2][v;\sigma_3, \delta_3],
\]
where $\sigma_1$ the identity automorphism of $\mathbb{K}[g^{\pm 1}, x, u]$, and $\delta_1$ is the $\sigma_1$-derivation of $\mathbb{K}[g^{\pm 1}, x, u]$ given by $\delta_1(x)=-\frac{1}{2}x^2,\ \delta_1(u)=g-1$, and $\delta_1(g)=-xg$; $\sigma_2$ is the identity automorphism of $\mathbb{K}[g^{\pm 1}, x, u][y; \sigma_1, \delta_1]$, and $\delta_2$ is the $\sigma_2$-derivation of $\mathbb{K}[g^{\pm 1}, x, u][y; \sigma_1, \delta_1]$ defined by
$\delta_2(x)=x,\ \delta_2(u)=-u,\ \delta_2(g)=0$, and $\delta_2(y)=y$. Finally, $\sigma_3$ and $\delta_3$ are the automorphism and the $\sigma_3$-derivation of $\mathbb{K}[g^{\pm 1}, x, u][y; \sigma_1, \delta_1][\zeta; \sigma_2, \delta_2]$, respectively, given by $\sigma_3(x)=x,\ \sigma_3(u)=u,\ 
\sigma_3(g)=g,\ \sigma_3(y)=y$ and $\sigma_3(\zeta)=\zeta+1$, $\delta_3(x)=1-g+xu,\ \delta_3(u)=-\frac{1}{2}u^2,\ 
\delta_3(g)=gu,\ \delta_3(y)=yu-g\zeta$ and $\delta_3(\zeta)=0$. 

\vspace{0.2cm}

Notice that $\mathbb{K}[g^{\pm 1}, x, u][y;
\sigma_1, \delta_1][\zeta; \sigma_2, \delta_2][v;\sigma_3,
\delta_3]$ satisfies the four conditions established in
\cite[Example 2.2 of Part I]{Fajardoetal2020}, which means that $\mathcal{D}$ is a bijective skew PBW extension over $\mathbb{K}[g^{\pm 1}, x, u]$, that is, 
\[
\mathcal{D}\cong \mathbb{K}[g^{\pm 1}, x, u][y; \sigma_1,
\delta_1][\zeta; \sigma_2, \delta_2]\cong \sigma(\mathbb{K}[g^{\pm 1}, x, u])\langle
y,\zeta,v\rangle.
\]
Notice that $\sigma_i$ and $\delta_i$ restricted to
$\mathbb{K}[g^{\pm 1}, x, u]$ are the endomorphism and derivation as in Proposition
\ref{sigmadefinition}, whence $\mathcal{D}$ satisfies the conditions of Theorem \ref{teo.filtracion gen}, and therefore $\mathcal{D}$ is a $\sigma$-filtered skew PBW extension. 
\item [\rm (2)] Semi-graded rings were defined by Lezama and Latorre  \cite{LezamaLatorre2017} in the following way: a ring $R$ is called \emph{semi-graded} (SG) if there exists a collection $\{R_p\}_{p\in \mathbb{N}}$ of subgroups $R_p$ of the additive group $R^{+}$ such that the following conditions hold:
\begin{itemize}
    \item $R = \bigoplus_{p\in \mathbb{N}}R_p$;
    \item For every $p,q\in \mathbb{N}$, $R_p R_q\subseteq R_0\oplus R_1\oplus\cdots \oplus R_{p+q}$;
    \item $1\in R_0$.
\end{itemize}
Notice that $R$ has a standard $\mathbb{N}$-filtration given by $\mathcal{F}_p(R):= R_0 \oplus\cdots \oplus R_p$ (\cite[Proposition 2.6]{LezamaLatorre2017}), and $\mathbb{N}$-graded rings and skew PBW extensions are examples of semi-graded rings (\cite[Proposition 2.7]{LezamaLatorre2017}). In the case of a skew PBW extension $A$ over a ring $R$, they assumed $A_0=R$, i.e. $R$ has the trivial positive filtration. Notice that under these conditions, skew PBW extensions over an algebra $R$ with the standard $\mathbb{N}$-filtration are $\sigma$-filtered. In this way, if $R$ does not have the trivial positive filtration, then $A$ is not generally $\sigma$-filtered, as can be seen in Remark \ref{rem.no filter}. 

\vspace{0.2cm}

Recently, Lezama \cite[Definition 4.3]{Lezama2021} introduced the notion of \emph{semi-graded Artin-Schelter regular algebra}, and proved under certain assumptions (\cite[Theorem 4.14]{Lezama2021}) that skew PBW extensions are semi-graded Artin-Schelter regular. With this purpose, he showed that $A$ is a connected semi-graded algebra with semi-graduation $A_0=\mathbb{K}$, and $A_p$ is the $\mathbb{K}$-subspace generated by $R_qx^{\alpha}$
such that $q+ |\alpha| = p$, for $p \geq 1$. In this regard, notice that $A$ with the standard $\mathbb{N}$-filtration (\cite[Proposition 2.6]{LezamaLatorre2017}) given by this semi-graduation is $\sigma$-filtered. 
\item [\rm (3)] Zhang and Zhang \cite{Zhang} defined double Ore extensions as a generalization of Ore extensions. If $R$ is an algebra, and $B$ is another algebra containing $R$ as a subring, then $B$ is a \emph{right double Ore extension} of $R$ if the following conditions hold:
\begin{itemize}
\item $B$ is generated by $R$ and two new variables $x_1$ and $x_2$.
\item The variables $x_1$ and $x_2$ satisfy the relation
\begin{equation*}
x_2x_1 = p_{12}x_1x_2 + p_{11}x_1^2 + \tau_1x_1 + \tau_2x_2 +
\tau_0,
\end{equation*}
where $p_{12}, p_{11}\in \mathbb{K}$ and $\tau_1, \tau_2, \tau_0 \in R$. 
\item As a left $R$-module, $B =\sum\limits_{\alpha_1,\alpha_2\geq 0}Rx_1^{\alpha_1}x_2^{\alpha_2}$ and it is a left free $R$-module with basis the set $\{x_1^{\alpha_1}x_2^{\alpha_2} \mid\alpha_1\geq 0,\alpha_2\geq 0\}$.
\item  $x_1R + x_2R \subseteq Rx_1 + Rx_2 + R$.
\end{itemize}
Left double Ore extensions are defined similarly. $B$ is a \emph{double Ore extension} if it is  left and right double Ore extension of $R$ with the same generating set $\{x_1,x_2\}$ (\cite[Definition 1.3]{Zhang}). $B$ is a \emph{graded right} ({\em left}) {\em double Ore extension} if all relations of $B$ are homogeneous with assignment ${\rm deg}(x_1) = {\rm deg}(x_2) =1$. They studied the property of being Artin-Schelter for these extensions (\cite{Zhang}, Theorem 3.3). 

\vspace{0.2cm}

Later, in \cite{Zhang2}, the same authors constructed 26 families of Artin-Schelter regular algebras of global dimension four using double Ore extensions. Briefly, to prove that a connected graded double Ore extension of an Artin-Schelter regular algebra is Artin-Schelter regular, Zhang and Zhang needed to pass the Artin-Schelter regularity from the trimmed double
extension $R_P[x_1, x_2; \sigma]$ to $R_P[x_1,x_2; \sigma,\delta,\tau]$ (details about the notation used for double Ore extensions can be seen in \cite{Zhang}). For this purpose they defined a new grading and with this a filtration: let $A=R_P[x_1,x_2; \sigma,\delta,\tau]$ be a graded (or ungraded) double extension of $R$ with $d_1 = \deg(x_1)$ and $d_2 = \deg(x_2)$ (or $\deg(x_1) = \deg(x_2)= 0$), the new defined graduation is $\deg'(x_1) = d_1 +1$ and $\deg'(x_2) = d_2 +1$ and $\deg'(r) = \deg(r)$ for all $r\in R$. Using this grading they defined a filtration of $A$ by $\mathcal{F}_p(A) = \{\sum r_{n_1,n_2}x_1^{n_1}x_2^{n_2}\in A\mid \deg'(r_{n_1,n_2} + n_1 \deg'(x_1) + n_2\deg'(x_2)\leq  m\}$. $\mathcal{F}=\{\mathcal{F}_p(A)\}_{p\in\mathbb{Z}}$ is an $\mathbb{N}$-filtration such that the associated graded ring ${G_{\mathcal{F}}}(A)$ is isomorphic to $R_P[x_1, x_2; \sigma]$; there is a central element $t$ of degree 1 such that ${\rm Rees}_{\mathcal{F}}(A)/(t) = R_P[x_1, x_2;\sigma]$ as graded rings and ${\rm Rees}_{\mathcal{F}}(A)/(t - 1) = R_P[x_1,x_2; \sigma,\delta,\tau]$ as ungraded rings; also, if $A$ is connected graded, then so are ${G_{\mathcal{F}}}(A)$ and ${\rm Rees}_{\mathcal{F}}(A)$, where ${\rm Rees}_{\mathcal{F}}(A)$ is the Rees ring associated to this filtration (\cite[Lemma 3.4]{Zhang}).

\vspace{0.2cm}

Related with this work, G\'omez and the first author proved that for $R= \bigoplus_{m\geq 0}R_m$ be an $\mathbb{N}$-graded algebra and $A=R_P[x_1,x_2; \sigma,\delta,\tau]$ be a graded right double Ore extension of $R$, if $P=\{p_{12},0\}$, $p_{12}\neq 0$ and $\sigma := \left(\begin{smallmatrix} \sigma_{11} && 0 \\ 0 && \sigma_{22} \\ \end{smallmatrix}\right)$, where $\sigma_{11}$, $\sigma_{22}$ are automorphism of $R$, then $A$ is a graded skew PBW extension over $R$ (\cite[Theorem 3.5]{GomezSuarez2019}). As one can check, the previous filtration on $A$ coincides with the filtration given in (\ref{eq.filtr A}), and so $A=R_P[x_1,x_2; \sigma,\delta,\tau]$ is a $\sigma$-filtered skew PBW extension.
\end{enumerate}
\end{example}

For the last theorem of the paper, recall that the \emph{enveloping algebra} of an algebra $R$ is the tensor product $R^e = R\otimes R^{op}$, where $R^{op}$ is the opposite algebra of $R$. If $M$ is an $R$-bimodule, and $\nu$, $\mu : R\to R$ are two automorphisms, then the skew $R$-bimodule $^\nu M^ \mu$ is equal to $M$ as a vector space with $a\cdot m\cdot b:=\nu(a)\cdot m\cdot \mu(b)$. When $\nu$ is the identity, we omit it. $M$ is a left $R^e$-module with product given by $(a\otimes b)\cdot m=a\cdot m\cdot b=\nu(a)\cdot m\cdot \mu(b)$. In particular, for $R$ and
$R^e$, we have the structure of left $R^e$-module given by
$(a\otimes b)\cdot x=\nu(a)x\mu(b)$, $(a\otimes b)\cdot (x\otimes
y)=a\cdot (x\otimes y)\cdot b=\nu(a)\cdot (x\otimes y)\cdot
\mu(b)=\nu(a)x\otimes y\mu(b)$.

\vspace{0.2cm}

An algebra $R$ is said to be \emph{skew Calabi-Yau} of dimension $d$ if it has a finite resolution by finitely generated projective bimodules, and there exists an algebra automorphism $\nu$ of $R$ such that
\[
{\rm Ext}^i_{R^e} (R,R^e) \cong \left\{
                \begin{array}{ll}
                                 0, & i\neq d, \\
                                 R^{\nu}, & i= d.
                                 \end{array}
                \right.
                \]
 as $R^e$ -modules. If $\nu$ is the identity, then $R$ is said to be \emph{Calabi-Yau}. Enveloping algebras and skew Calabi-Yau algebras related to skew PBW extensions were studied in \cite{ReyesSuarez2017}.
\begin{theorem}\label{teo.skew CY} If $A$ is a $\sigma$-filtered skew PBW extension over a ring $R$ such that $H(R)$ is Auslander-regular, then $A$ is skew Calabi-Yau.
\end{theorem}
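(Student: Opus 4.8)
The plan is to move the problem to the homogenization $H(A)$, where Theorem~\ref{teo.propiedades H(A)} already provides the regularity and finiteness needed, deduce that $H(A)$ and $G(A)$ are graded skew Calabi-Yau, and then descend the property back to $A$ along the filtration of Theorem~\ref{teo.filtracion gen}. By Theorem~\ref{teo. homogenization}, $H(A)=\sigma(H(R))\langle x_1,\dots,x_n\rangle$ is a graded bijective skew PBW extension over $H(R)$, and by Theorem~\ref{teo.propiedades H(A)} both $H(A)$ and $G(A)$ are connected graded, Noetherian, Artin-Schelter regular domains; set $d={\rm gld}(H(A))$. I would then invoke the correspondence between Artin-Schelter regularity and the skew Calabi-Yau property in the connected graded Noetherian setting (for $B$ connected graded and Noetherian, the one-sided condition ${\rm Ext}^i_B(\mathbb{K},B)\cong\delta_{i,{\rm gld}(B)}\mathbb{K}$ upgrades to the bimodule identity ${\rm Ext}^i_{B^e}(B,B^e)\cong\delta_{i,{\rm gld}(B)}\,{}^{1}B^{\mu}$): thus $H(A)$ is graded skew Calabi-Yau of dimension $d$, and $G(A)\cong H(A)/\langle z\rangle$ is graded skew Calabi-Yau of dimension $d-1$, where $z$ is the degree-one central regular element (regular by Theorem~\ref{teo.propiedades H(A)}(2)).

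Next, I would transfer this to $A$. Recall from Proposition~\ref{prop.rel homogeniz Grad} and the notation fixed just before it that $A$ carries the connected filtration $\{\mathcal{F}_p(A)\}$ of Theorem~\ref{teo.filtracion gen} with associated graded ring $G(A)$, that ${\rm Rees}(A)\cong H(A)$, and that $A\cong H(A)/\langle z-1\rangle$. The plan is to apply a filtered descent principle for the skew Calabi-Yau property: a connected filtered Noetherian algebra whose associated graded ring is skew Calabi-Yau is itself skew Calabi-Yau, of the same dimension, with Nakayama automorphism induced by that of the graded ring. In the hands-on form, I would take a finitely generated graded projective bimodule resolution $P_\bullet\to H(A)$ of length $d$, use that $H(A)^e/\langle z\otimes 1-1\otimes z,\ z-1\rangle\cong A^e$ together with the fact that $z-1$ remains a non-zero divisor there to reduce $P_\bullet$ modulo $z-1$ to a finite projective $A^e$-resolution of $A$ of length $d-1$, and then run the long exact ${\rm Ext}$-sequence attached to multiplication by $z-1$ on $P_\bullet$: this yields ${\rm Ext}^i_{A^e}(A,A^e)=0$ for $i\neq d-1$ and ${\rm Ext}^{d-1}_{A^e}(A,A^e)\cong{}^{1}A^{\nu}$, where $\nu$ is the automorphism of $A$ induced by a Nakayama automorphism $\mu$ of $H(A)$ that fixes $z$ (the latter being forced: for $\mu$ to descend to $H(A)/\langle z-1\rangle$ one needs $\mu(z)-1\in(z-1)H(A)$, which in the connected graded domain $H(A)$ is only possible when $\mu(z)=z$). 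Hence $A$ is skew Calabi-Yau of dimension $d-1$.

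The hard part will be the descent rather than the homogenization. Two points require care. First, the reduction of the bimodule resolution of $H(A)$ modulo $z-1$: one must pin down $A^e$ as the correct quotient of $H(A)^e$ and check that the relevant central elements stay non-zero divisors there, for which the Noetherianity and finite global and injective dimension of $H(A)$ --- all supplied, through Theorem~\ref{teo.propiedades H(A)}, by the hypothesis that $H(R)$ is Auslander-regular --- are exactly what is needed. Second, controlling the Nakayama automorphism: one must show that a Nakayama automorphism $\mu$ of $H(A)$ satisfies $\mu(z)=z$, so that $\mu$ descends to $A$ and the twisting bimodule ${}^{1}H(A)^{\mu}$ specializes to ${}^{1}A^{\nu}$ with $\nu$ the induced automorphism. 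If instead one quotes a ready-made filtered (or Rees) descent theorem for skew Calabi-Yau algebras, both points are absorbed into it and the argument reduces to the observation already recorded in Theorem~\ref{teo.propiedades H(A)}(5),(9): that $G(A)$ is connected graded, Noetherian, and Artin-Schelter regular, hence graded skew Calabi-Yau.
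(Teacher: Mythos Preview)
Your proposal is correct, and in fact the paper takes exactly the shortcut you describe in your final paragraph. The paper's entire proof is: by Theorem~\ref{teo.propiedades H(A)}(4),(5),(9), $A$ is a PBW deformation of the Noetherian Artin-Schelter regular algebra $G(A)$, and then \cite[Proposition 2.15]{Gaddis2016} (which asserts that any PBW deformation of a Noetherian Artin-Schelter regular algebra is skew Calabi-Yau) finishes the argument. This is precisely your ``ready-made filtered descent theorem,'' so your closing sentence already matches the paper's proof; note that you should also invoke part~(4), since being a PBW deformation is the hypothesis Gaddis's proposition needs.

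Your hands-on route via ${\rm Rees}(A)\cong H(A)$ and reduction modulo $z-1$ is essentially an unpacking of what lies behind that proposition, and is a legitimate alternative. One caution: your justification that the Nakayama automorphism $\mu$ of $H(A)$ satisfies $\mu(z)=z$ is phrased circularly (you argue that \emph{if} $\mu$ descends then $\mu(z)=z$, whereas the goal is to show it descends). The honest argument is the one you hint at: $\mu$ is a graded automorphism, $z$ is central so $\mu(z)$ is central of degree one, and the degree-one part of the center of $H(A)$ is $\mathbb{K}z$, giving $\mu(z)=cz$; then one checks $c=1$ independently (e.g.\ via the homological determinant, or by comparing with the Nakayama automorphism of $G(A)\cong H(A)/\langle z\rangle$). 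If you go the hands-on route this point deserves a clean standalone argument rather than the parenthetical one you give.
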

\begin{proof}
From Theorem \ref{teo.propiedades H(A)}, parts (4), (5) and (9), we know that $A$ is a PBW deformation of a Noetherian Artin-Schelter regular algebra
$G(A)$. The assertion follows from \cite[Proposition 2.15]{Gaddis2016}.
\end{proof}
\begin{example} Let $R=\mathbb{K}[t_1,\dots, t_m]$, $m\geq 0$. Since $H(R)=R[z]$ is Auslander-regular and a commutative polynomial ring in $m+1$ variables over $\mathbb{K}$, then every one of the examples of skew PBW extensions over $R$ presented in \cite{Fajardoetal2020}, \cite{GallegoLezama}, \cite{LezamaReyes}, \cite{Suarez}, and 
\cite{Suarezthesis} that are $\sigma$-filtered, are skew Calabi-Yau. In particular, every one of the algebras in the proof of Proposition \ref{prop.ex dim 2} are skew Calabi-Yau.
\end{example}
\section{Future work}\label{confutwork}
Hausdorf, Seiler and Steinwandt \cite{Hausdorfetal2002} gave solution to the problem of the completion of the normal form algorithm for Gr\"obner bases in Weyl algebras using the technique of homogenization, with the aim to define Gr\"obner bases and (weak) involutive bases for non-term orders. Since Weyl algebras are skew PBW extensions, and Gr\"obner basis theory of these objects was formulated in \cite{LezamaGallego2017}, we can think of trying to generalize its theory to the setting of $\sigma$-filtered skew PBW extensions using the results established in this paper, and hence, to formulate a theory of involutive bases for skew PBW extensions.

\vspace{0.2cm}

On the other hand, since Redman \cite{Redman1999} and Chirvasitu et al. \cite{Chirvasitu2018} studied the noncommutative geometry of the homogenization of two classes of three dimensional skew polynomial algebras, and of the homogenization of the universal enveloping algebra $U(\mathfrak{sl}(2, \mathbb{C}))$, respectively, keeping in mind that these algebras are particular examples of skew PBW extensions (Examples \ref{ex-tresdimensional} and \ref{ex-Liealgebratresdimensional}), we can think of establishing several properties of  noncommutative geometry of the homogenization of skew PBW extensions. It is a natural task to investigate if the treatment developed by Redman and Chirvasitu et al. can be extended to the more general setting of these extensions.


\end{document}